\newtheorem{thm}{Theorem}[section]
\newtheorem{prop}[thm]{Proposition}
\newtheorem{cor}[thm]{Corollary}
\theoremstyle{definition}
\newtheorem{dfn}[thm]{Definition}
\newtheorem{lem}[thm]{Lemma}
\newtheorem{asm}[thm]{Assumption}
\newtheorem{exm}[thm]{Example}
\newtheorem{rem}[thm]{Remark}
\def\theequation{\thesection.\arabic{equation}}
\numberwithin{equation}{section}
\newcounter{proofstep}
\newcommand{\cA}{\mathcal{A}}
\newcommand{\cC}{\mathcal{C}}
\newcommand{\cF}{\mathcal{F}}
\newcommand{\cI}{\mathcal{I}}
\newcommand{\cL}{\mathcal{L}}
\newcommand{\cM}{\mathcal{M}}
\newcommand{\cX}{\mathcal{X}}
\def \E{\mathbb{E}}
\def \N{\mathbb{N}}
\def \R{\mathbb{R}}
\def\1{{\bf{1}}}
\def\0{{\bf{0}}}
\def\reff#1{{\rm(\ref{#1})}}
\newcommand{\dfull}[4]{\sum_{j=1}^{\infty} c_j \langle #1,#2\rangle^#3#4}
\newcommand{\ds}[2][f_j]{\dfull{#2}{#1}{2}{}}
\NewDocumentCommand{\dsder}{ O{f_j} O{f_j} m }{\dfull{#3}{#1}{{}}{#2}}
\newcommand{\der}{D_m}
\newcommand{\dsup}[1]{d_{#1}}
\newcommand{\dist}{d}
\newcommand{\jet}[2]{J^{#1,#2}}
\newcommand{\cjet}[2]{\expandafter\overline\jet{#1}{#2}}
\newcommand{\test}{\Phi}
\newcommand{\law}{\cL}
\newcommand{\ca}{ca(\R)}
\newcommand{\controlset}{\mathcal{A}}
\newcommand{\diffop}{\mathcal{L}}
\newcommand{\invSet}{\mathcal{O}}
\newcommand{\invSetN}{\invSet_{\!N}}
\newcommand{\probSpace}{\mathcal{P}}
\newcommand{\mSpace}{\probSpace}
\newcommand{\expmoments}{\mathcal{M}}
\newcommand{\invMeas}[1]{\expmoments_{#1}}
\newcommand{\invMeasN}[1]{\expmoments_N^{#1}}
\newcommand{\invSetNCl}[1]{\expandafter\overline\invSetN^{#1}}
\newcommand{\invSetCl}{\overline\invSet{}}
\newcommand{\supdiff}[1]{\jet{1}{+}_{#1}}
\newcommand{\supdiffN}{\supdiff{\invSetN}}
\newcommand{\subdiff}[1]{\jet{1}{-}_{#1}}
\newcommand{\subdiffN}{\subdiff{\invSetN}}
\begin{document}

\title{
Viscosity solutions for
controlled McKean--Vlasov jump-diffusions
}

\author{Matteo Burzoni\thanks{Mathematical Institute, University of Oxford. Partly supported by the Hooke Research Fellowship from the University of Oxford.}\phantom{\footnotesize 1}\footnotemark[4]
    \and Vincenzo Ignazio\thanks{Department of Mathematics, ETH Z\"urich.}\phantom{\footnotesize 1}\footnotemark[4]
\and A. Max Reppen\thanks{Department of Operations Research and Financial Engineering, Princeton University. Supported by the Swiss National Science Foundation grant SNF 181815.}
\and H. Mete Soner\footnotemark[2]\phantom{\footnotesize 1}\thanks{Partly supported by the Swiss National Science Foundation grant SNF 200020\_172815.}\phantom{\footnotesize 1}\thanks{Partly supported by the ETH Foundation and the Swiss Finance Institute.}
}

\date{\today}

\maketitle
\markboth{Burzoni, Ignazio, Reppen \& Soner}{Controlled McKean--Vlasov jump-diffusions}

\begin{abstract}
We study a class of non linear
integro-differential equations on the 
Wasserstein space related to 
the optimal control of McKean--Vlasov jump-diffusions.
We develop an intrinsic notion of viscosity solutions that does not rely on the lifting to an Hilbert space
and prove a comparison theorem for these solutions.
We also show that the value function is the unique viscosity solution.
\end{abstract}

\renewcommand{\theequation}{\arabic{section}.\arabic{equation}}
\pagenumbering{arabic}

\section{Introduction}

The main goal of this paper
is to develop a viscosity theory for
integro-differential  equations on the 
Wasserstein space related to 
the optimal control of McKean--Vlasov jump-diffusions.
These control problems
 are motivated
by the mean field games theory
developed by Lasry \& Lions \cite{LL1,LL2,LL3} 
(see also the videos of the College de France
lectures of Lions \cite{Lions}) and
by Huang, Caines \& Malham{\'e} \cite{HCM1,HCM2,HCM3}.
Although the mean-field games and McKean--Vlasov
control problems are related, there are subtle differences between 
these problems,
and a thorough introduction is 
given by Carmona, Delarue \& Lachapelle \cite{CDL}.
Indeed, for both problems the master equations  share many common
properties as initially derived by Bensoussan, Freshe \& Yam
\cite{BFY1,BFY2,BFY3}.
We refer to the videos of Lions \cite{Lions}, the lecture notes of Cardaliguet \cite{cnotes}
and the exhaustive book of Carmona \& Delarue \cite{CD} for more 
information on both problems and also for further references.

The state space of these problems is 
the set of probability measures, and in most
applications the Wasserstein space of 
probability measures with finite second moments are used.
Since this space is not linear, one encounters 
some difficulties in differentiation
and Lions \cite{Lions}
observed that one can naturally lift functions defined on
the Wasserstein space to functions on an appropriate $\cL^2$ space,
which allows for standard differentiation
and more importantly an immediate
use of It\^{o}'s calculus.
This approach is then used 
by Cardaliguet, Delarue, Lasry \& Lions
\cite{CDLL} to obtain the regularity
of the solutions to the master equation
of a mean-field game.  This very strong regularity 
result  implies in particular for a classical interpretation 
of the master equations on the Wasserstein space.  On the other hand,
in the absence of such strong regularity,
one needs to develop the notion of viscosity solutions 
for McKean--Vlasov control problems.
Pham \& Wei \cite{PW,PW1} initiated this study
using Lions' lifting for controlled diffusion processes.
Bandini, Cosso, Fuhrman \& Pham \cite{BCFP1,BCFP2}
further developed this theory 
for the dynamic programming equations for the 
partially observed systems which also have the same structure.  
An important advantage of this
approach to viscosity theory, in addition to the Hilbert structure
of $\cL^2$, is its ability to
utilize the existing results for viscosity solutions
on Hilbert spaces \cite{li,FGS}.  An intrinsic 
approach to viscosity solutions without lifting 
could also have advantages and Wu \& Zhang \cite{zhang}
studies this approach for diffusion process
using the techniques developed for path-dependent
viscosity solution \cite{path1,path2}.

Our goal is to develop  a viscosity theory for
jump-diffusion processes.  For the standard control
problems, the corresponding 
dynamic programming equations 
contain a non-local integral terms
related to the infinitesimal generator of
the jump-Markov processes.  Still
these equations have maximum principle
and  a viscosity theory is appropriate.
Starting from \cite{soner1988,soner1989,soner1990,sayah}
definitions, stability and comparison results for nonlinear integro-differential
equations of this type have
been developed.  We refer to more recent paper
by Barles \& Imbert \cite{barles} for more information.

The jump terms in these equations introduce
several new aspects.  In particular,  
for the McKean--Vlasov control problems,
the operator appearing in the dynamic programming equations
do not act on the Lions derivative (i.e., the derivative in the $\cL^2$
space of the lifted function)
but rather on the standard (sometimes called linear) derivative 
on the Wasserstein space.
Indeed, when all functions are smooth, 
 it is immediate that
the Lions derivative is an $\cL^2$ function
and it is equal to the space derivative of the linear 
derivative (see  Section 5.4 in \cite{CD}).  For the diffusion problems,
only the space derivatives of 
the linear derivative appear in the 
dynamic programming equation and therefore 
one can simply replace them by the Lions derivative.
For the integro-differential equations however,
one needs to recover the linear
derivative from the Lions derivative 
even to state the equations.  
Unfortunately the required regularity (to 
immediately connect these two derivatives)
is not
readily available when one is working 
in the viscosity structure. 

We choose
to work directly on the Wasserstein space with the  
linear derivative
to develop an intrinsic theory.
Although this approach has several
advantages, the dynamic programming equations
on the Wasserstein space are not as well 
studied 
as the lifted equation on the $\cL^2$ spaces
and parts of the viscosity theory has to be 
revisited.
Indeed, we  first provide 
appropriate definitions of viscosity sub and super-solutions
for a class of integro-differential 
equations in this space.  
We then show that the value function is a viscosity
solution in this sense.  Several properties 
of the dynamics is used to construct 
the framework that is appropriate for this
problem. In particular, we consider the 
equation only on the subset of the 
measures that have exponential moments.

One of the main contributions of this paper
is a comparison result for the viscosity solutions 
on the Wasserstein space.  
An important ingredient of our approach is a
 distance like function $d$
given for two probability measures $\mu, \nu$ by,
\[
d(\mu,\nu)= \sum_{j=1}^\infty c_j \langle\mu-\nu,f_j\rangle^2,
\]
where the countable set $\{f_j\}_{j \in \N}$
is carefully constructed to have
several important invariance type properties.
In the 
standard doubling-variables argument,
we penalize the two
points using $d$.  Then the subtle properties of 
$f_j$ 
allow us to estimate its linear derivative of $d$
by itself.  

The paper is organized as follows.  We first introduce
a class of optimal control problems of McKean--Vlasov
type in the next section.  A guiding example
for this class is a model of technological innovation
\cite{kortum,achdou}.  We discuss this problem
in Section \ref{s.innovation}.  The natural state space 
for this study is the subset of the Wasserstein space
of measures with exponential moments and under mild
assumptions, the corresponding dynamical system 
lives in this space.  In Section \ref{s.sigma} we
define this space, prove its functional analytic properties 
and show its connection to the 
controlled dynamics. In Section \ref{s.test} we give the 
definition of a viscosity solution and in Section \ref{s.DPE_viscosity}
show that the
value function is a viscosity solution.  Section \ref{s.comparison} provides
the construction 
of the functions $f_j$ and the comparison result.  We prove several technical results
in the Appendix.
\vspace{10pt}

{\em{Notation.}}
For a random variable $X$, defined on a probability space 
$(\Omega,\cF,P)$, 
we denote by $\law(X)$ the distribution of $X$ under $P$.
We denote by $\probSpace(\R)$ the space of probability measures 
on $\R$ and by $\ca$ the linear space of countably additive measures.
For any $\mu\in\probSpace(\R)$ and for any integrable function $f:\R\to\R$, we 
use the standard compact notation $\langle\mu,f\rangle:=\int_{\R}f(x)\mu(\dif x)$.
If $f$ is smooth, $f^{(i)}$ denotes the $i$-th order derivative of $f$ with $f^{(0)}=f$.
We endow the space of probability measures $\probSpace(\R)$  with the weak$^*$ topology $\sigma(\probSpace(\R),\cC_b(\R))$, where $\cC_b(\R)$ is the space of continuous and bounded functions on $\R$.
We denote by $\mu_n\rightarrow\mu$ the $\sigma(\probSpace(\R),\cC_b(\R))$-convergence of $\mu_n$ to $\mu$, i.e.,
$\langle\mu_n,f\rangle$ converges to $\langle\mu,f\rangle$ for every $f \in \cC_b(\R)$.

\section{The optimal control problem and the assumption}
\label{s.control}

Let $(\Omega,\cF,(\cF_s)_{s\in[0,T]},P)$ 
be a given filtered probability space supporting the 
following class of controlled McKean--Vlasov 
stochastic differential equations (SDEs) with initial condition
 $\law(X^\alpha_t)= \mu\in\mSpace(\R)$ and
\begin{equation}\label{eq:SDE}
\dif X^\alpha_s=b(s,\law(X^\alpha_s),\alpha_s)\dif{s}
+\sigma(s,\law(X^\alpha_s),\alpha_s) \dif W_s +\dif J_s,
\quad s >t,
\end{equation}
where $J_s$ is a purely discontinuous process with controlled intensity $\lambda(s,\law(X^\alpha_s),\alpha_s)$ and jump size given by an independent random variable $\xi$ with distribution $\gamma\in\probSpace(\R)$.
The class of admissible controls $\controlset$ is 
the set of all measurable deterministic functions of time
 with values in a prescribed measurable space $A$.
The value function is then given by
\[
V(t,\mu):=\inf_{\alpha\in \controlset}\left[\int_t^T L(s,\law(X^\alpha_s),\alpha_s)\dif{s}
+G(\law(X^\alpha_T))\right],
\]
with given functions $L$ and $G$.
The optimal control problem consists of 
finding the value $V$ and a minimizer (if it exists).

We close this section
by stating a set of conditions  assumed
to hold throughout the paper and they will not always be stated
explicitly later on.

\begin{asm}\label{a.standing}
There exist constants $C_0,\kappa_0,\delta>0$ such that the coefficients $b,\sigma,\lambda,L:[0,T]\times\mSpace(\R)\times A\to\R$ satisfy:
\begin{enumerate}[label=\textnormal{\textbf{(H\arabic*)}},ref=(H\arabic*)]
\item \label{ass:bounded}
for any $\mu\in\mSpace(\R)$, $a\in A$, $s\in[0,T]$,
\[
|b(s,\mu,a)|+|\sigma(s,\mu,a)|+|\lambda(s,\mu,a)|\le C_0
\]
\item \label{ass:convergence}
there exists a finite set $\cI\subset\N$ such that for any $\mu,\mu'\in\mSpace(\R)$, $a\in A$, $t,s\in[0,T]$
\begin{align*}
|b(t,\mu,a)-b(s,\mu',a)|+|\sigma(t,\mu,a)-\sigma(s,\mu',a)|&\le\kappa_0\bigg(|t-s|+\sum_{i\in \cI}|\langle\mu - \mu',x^i\rangle|\bigg),\\
|\lambda(t,\mu,a)-\lambda(s,\mu',a)|&\le\kappa_0\bigg(|t-s|\sum_{i\in \cI}|\langle\mu - \mu',x^i\rangle|\bigg).
\end{align*}
\item \label{ass:state} $\gamma$ has $\delta$-\emph{exponential moment}:
\[
\int_{\R}\exp({\delta|x|})\gamma(\dif x)<\infty.
\]
\item \label{ass:cost}
    $L$ is of the form $L_1(t,\mu, a)+ L_2(a) \langle\mu,L_3(\cdot)\rangle$, where
    $L_1:[0,T]\times\mSpace(\R)\times A\to\R$ is continuous in $(t,\mu)$, 
    uniformly in $a$, $L_2 : A \to \R$ with $\sup_{a \in A} L_2(a) < \infty$, 
    and $L_3:\R\to\R$ satisfies $|L_3(x)x|\le C_0\exp(\delta |x|)$ for every $x\in\R$.  
    The terminal cost $G$ is continuous.
\end{enumerate}
\end{asm}

{\em{In what follows, the constants $C_0,\kappa_0,\delta>0$ are 
always as in the above assumption.}}

\section{State space and dynamic programming}
\label{s.state}

Since the Brownian motion has 
exponential moments, Assumption \ref{a.standing},
in particular \ref{ass:state}, 
implies that the solutions of the state equation \eqref{eq:SDE}
has also exponential moments.  Therefore it is natural
to study the optimal control problem
in $\invSet := [0,T) \times\expmoments$, where $\expmoments$
is the subset of probability measures with $\delta$-exponential moments,
i.e., 
\[
\mu\in\expmoments \quad
\Leftrightarrow
\quad
\langle\mu,\exp(\delta|\cdot|)\rangle = \int_\R 
\exp(\delta|x|) \mu(\dif x)<\infty,
\]
where $\delta$ is as in \ref{ass:state}.
Our first result is the well-posedness of the problem 
and its straightforward proof 
is given in Appendix \ref{s.solutionSDE}.
\begin{thm} Under Assumption \ref{a.standing}, the SDE \eqref{eq:SDE} has a unique solution 
for any $\alpha\in\controlset$. 
\end{thm}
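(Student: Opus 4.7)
The key structural observation is that the coefficients $b,\sigma,\lambda$ in \eqref{eq:SDE} depend on the state only through its law: if we freeze the marginal law along a candidate flow $m=(m_s)_{s\in[t,T]}$, the equation becomes a linear jump-diffusion with purely time-dependent deterministic coefficients. The proof will therefore proceed by a Picard iteration on the flow of marginals, in the spirit of Sznitman's classical argument, but adapted to the fact that Assumption~\ref{ass:convergence} only provides Lipschitz dependence on the finitely many moments $\langle\cdot,x^i\rangle$ for $i\in\cI$.

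First, fix $\alpha\in\controlset$, $t\in[0,T)$ and $\mu\in\expmoments$. For each continuous flow $m:[t,T]\to\expmoments$, define the ``frozen'' equation
\[
Y^m_s = X_0 + \int_t^s b(r,m_r,\alpha_r)\,\dif r + \int_t^s \sigma(r,m_r,\alpha_r)\,\dif W_r + J^m_s - J^m_t,
\]
where $\law(X_0)=\mu$ and $J^m$ is a compound Poisson process with time-inhomogeneous intensity $\lambda(\cdot,m,\alpha)$ and jump law $\gamma$, independent of $W$ and $X_0$. This SDE has a trivial pathwise solution. Using \ref{ass:bounded} together with the $\delta$-exponential moment of $\gamma$ in \ref{ass:state}, a direct computation of $\E[\exp(\delta|Y^m_s|)]$ via the Laplace transform of the Gaussian and compound Poisson pieces shows that $\Phi(m)_s:=\law(Y^m_s)$ stays in $\expmoments$ and depends continuously on $s$, so $\Phi$ maps the space $\cC([t,T];\expmoments)$ to itself. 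A fixed point of $\Phi$ is exactly a solution of \eqref{eq:SDE}.

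Next I would establish the contraction property of $\Phi$ on a short time interval with respect to the pseudo-metric
\[
d_T(m,m') := \sup_{s\in[t,T]} \sum_{i\in\cI} |\langle m_s - m'_s, x^i\rangle|,
\]
tailored to the structure of \ref{ass:convergence}. Since the $i$-th moment of $Y^m_s$ can be expressed as a smooth functional of $\int_t^\cdot b\,\dif r$, $\int_t^\cdot \sigma^2\,\dif r$ and $\int_t^\cdot \lambda\,\dif r$ (together with the moments of $\gamma$, which are finite by \ref{ass:state}), uniform bounds from \ref{ass:bounded} and the Lipschitz estimate \ref{ass:convergence} yield $|\langle\Phi(m)_s-\Phi(m')_s,x^i\rangle|\le C(s-t)\,d_T(m,m')$ for every $i\in\cI$. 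Choosing the horizon small enough produces a strict contraction on $[t,t+\eta]$; iterating on subintervals of length $\eta$ gives existence and uniqueness on $[t,T]$. The main subtlety is the last step: verifying that the moment estimates needed for contraction indeed transfer through the nonlocal jump part without invoking a full Wasserstein bound, so that only the prescribed moments in $\cI$ need be controlled; this is where the finiteness of $\cI$ combined with \ref{ass:state} is essential.
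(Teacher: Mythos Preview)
Your approach is correct and genuinely different from the paper's. Both arguments freeze the law, solve the resulting (now linear, time-inhomogeneous) jump-diffusion explicitly, and run a fixed-point argument on the map $\Phi$ sending a candidate flow of laws to the flow of marginals of the frozen solution. The difference lies in the metric and the contraction mechanism. The paper works on the compact set $\invMeas{Ne^{K^*(T-t)}}$ with the \emph{infinite} family $\Theta=\{f_j\}$ carrying the $(*)$-property of Definition~\ref{def:invariant}, and the associated metric $d(\mu,\nu)=\sum_j c_j|\langle\mu-\nu,f_j\rangle|$; an It\^o expansion of each $f_j$ together with the closure property $f_j',f_j'',g_j\in\Theta$ and the coefficient monotonicity~\eqref{eq: coeff} yields a Gronwall inequality, from which $\Phi^k$ is shown to be a contraction for large $k$. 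You instead exploit that \ref{ass:convergence} forces $b,\sigma,\lambda$ to factor through the \emph{finitely many} moments $(\langle\mu,x^i\rangle)_{i\in\cI}$, and that the moments of the frozen solution are explicit polynomials in the integrated coefficients $\int b,\int\sigma^2,\int\lambda$; this gives a direct Lipschitz bound with constant $C(s-t)$ and hence a short-time contraction that you then patch.

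Your route is more elementary for this particular statement and avoids constructing $\Theta$. The paper's route, on the other hand, is not wasted effort: the same family $\Theta$ and coefficients $c_j$ are the backbone of the comparison proof in Section~\ref{s.comparison}, so the appendix doubles as a warm-up for that machinery. One small point to make explicit in your write-up: $d_T$ is only a pseudo-metric on $\cC([t,T];\expmoments)$, so for Banach's theorem you should pass to the quotient, i.e.\ run the contraction on the moment path $s\mapsto(\langle m_s,x^i\rangle)_{i\in\cI}$ in $C([t,T];\R^{|\cI|})$; this is legitimate precisely because \ref{ass:convergence} implies the coefficients depend on $\mu$ only through those moments, and the frozen equation then determines the full law (hence uniqueness) once the moment path is fixed.
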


The described McKean--Vlasov control problem 
is deterministic and therefore, it is classical that  
the dynamic programming principle (DPP)  holds \cite{FS}, 
\begin{equation}\label{eq:DPP}
V(t,\mu)= \inf_{\alpha\in \controlset} \left[\int_t^\theta L(s,\law(X^\alpha_s),\alpha_s)\dif{s} 
+V(u,\law(X^\alpha_u))\right], \quad
\forall \theta \in [t,T].
\end{equation}

We need several definitions to formally state the corresponding
dynamic programming equation.  

\begin{dfn}
\label{d.derviative}
For $\varphi:\mSpace(\R)\to\R$, when exists, \emph{the linear derivative} of $\varphi$ 
at $\mu \in \mSpace(\R)$ is a function 
$\der\varphi:\mSpace(\R)\times\R\to \R$ such that for every $\mu,\mu'\in\mSpace(\R)$,
\[
\varphi(\mu)-\varphi(\mu')=\int_0^1\int_{\R}\der\varphi(\lambda\mu+(1-\lambda)\mu',x)\ (\mu-\mu')(dx)  \dif\lambda.
\]
When $\varphi:[0,T]\times \mSpace(\R)\to\R$, with an abuse
of notation, we denote the linear derivative with respect to
the $\mu$-variable still by $\der\varphi:[0,T]\times
\mSpace(\R)\times\R\to \R$.
\end{dfn}

 This derivative was used by Fleming \& Viot \cite{FV} to
study a martingale problem  in populations dynamics.
Also  recently Cuchiero, Larsson \& Svaluto-Ferro \cite{CLS} 
provided several of its properties in the  context of polynomial diffusions.

\begin{rem}\label{rmk:derFrechet}
Consider the linear function $\varphi(\mu)=\langle\mu,f\rangle$
with some $f:\R\to\R$.  It is 
immediate that $\der \varphi(\mu,x)=f(x)$ for any $(\mu,x)\in \mSpace(\R)\times\R$.
Moreover, suppose that $\varphi:\ca\to\R$ is Frechet differentiable and such that $D\varphi:\ca\to\R$ can be represented as $D\varphi[\mu]=\langle\mu,f\rangle$, for some $f:\R\to\R$. Then $f=\der \varphi$, namely, $D\varphi[\mu]=\langle\mu,\der \varphi\rangle$.

By the chain rule,  the linear derivative of $\varphi(\mu)=F(\langle\mu,f\rangle)$
with some smooth function $F$ is equal to 
$\der \varphi(\mu,x)=F^\prime (\langle\mu,f\rangle)f(x)$.\qed
\end{rem}

For a given input function $v=v(t,\mu,x)$, the operator 
$\diffop^{a,\mu}_t$ acting on the $x$-variable is given by,
\begin{alignat*}{2}
    \diffop^{a,\mu}_t[v](x)&:={}&&b(t,\mu,a)\frac{\partial v}{\partial x}(t,\mu,x)  
    +\frac{1}{2}\sigma^2(t,\mu,a) \frac{\partial^2 v}{\partial x^2}(t,\mu,x)\\
    &&&\quad+\lambda(t,\mu,a)\int_{\R}(v(t,\mu,x+y)-v(t,\mu,x))\gamma(dy).
\end{alignat*}

Using the above definitions, 
classical considerations starting from \eqref{eq:DPP}
formally lead to the following 
dynamic programming equation:
\begin{equation}\label{eq:DPE}
-\partial_t V(t,\mu) + \sup_{a \in A}H^a(t,\mu,\der V)=0,
\end{equation}
where,
\[
H^a(t,\mu, v):=- L(t,\mu,a)-\langle\mu, \diffop^{a,\mu}_t[v]\rangle.
\]

Indeed, as in the finite-dimensional 
optimal control theory,
if the value function is smooth
and cylindrical (i.e., if 
$V$ has the form
\[
    V(t,\mu)=F(t,\langle\mu,f_1\rangle,\ldots,\langle\mu,f_n\rangle)
\]
for some smooth functions $F$ and $f_1,\ldots,f_n$,
then it is possible to derive \eqref{eq:DPE} rigorously.
Importantly, in this case, the classical It\^{o}'s Formula 
can be applied to  $V(u,\law(X^\alpha_u))=
F(u,\E[f_1(X^\alpha_u)],\ldots,\E[f_n(X^\alpha_u)])$ 
for any control $\alpha$ and time $u$.
However, this assumption on the value
function is not expected to hold and also is not needed.
In Section~\ref{s.DPE_viscosity}, 
we prove that the value function is the unique viscosity solution to \eqref{eq:DPE}
even when it is neither smooth nor cylindrical.

\section{A model of technological innovation}
\label{s.innovation}

We briefly present here an example of a McKean--Vlasov control 
problem where the underlying process is a jump-diffusion.
The controlled equations represent a model of knowledge
diffusion which appeared in the macroeconomic literature in the area of 
search-theoretic models of technological change, e.g.\ \cite{achdou}\footnote{We thank Rama Cont for bringing this paper to our attention.},\cite{kortum}.
With controls $\alpha=(\alpha^1,\alpha^2)$, a social planner 
aims at promoting technological innovation in the society by controlling the process
\begin{equation*}
\dif X^\alpha_s=b\big(\E[X^\alpha_s],\alpha^1_s\big)\dif{s}+\sigma \dif W_s + \dif J_s,
\end{equation*}
where $X_0\sim\mu_0$ and $J_s$ is a purely discontinuous process 
with controlled intensity $\lambda(\E[X^\alpha_s],\alpha^2_s)$ and jump size given by a non-negative independent random variable $\xi$ with distribution $\gamma$.
The value $\exp(X^\alpha_s)$ represents the efficiency of the production of a continuum of consumption goods (technological frontier),
and the initial (logarithmic) efficiency is represented by the distribution $\mu_0$.
The aim is to maximize the average efficiency of the production of goods in order to foster the growth of the economy:
\[
    \text{maximize!}\quad \E\left[\int_0^T (1 - \alpha^2_s) \exp(X^\alpha_s)-(\alpha^1_s)^2\dif{s}\right],
\]
where $\alpha=(\alpha^1,\alpha^2)$ is chosen from an
appropriate class of deterministic processes.

The social planner can promote innovation by issuing research funds (exercising the control $\alpha^1$). On the other hand, she can promote exchange of ideas by setting up meetings at a controlled Poisson rate.
Meetings have the effect of inducing a non-negative  jump in the technological frontier, according to a random variable with distribution $\gamma$.
The functions $\lambda$ and $b$ are bounded since meetings cannot happen too frequently and research funds have a limited impact on the technological frontier.
These functions also depends on the distribution of $X^\alpha$ through its mean.
This aspect represents the positive feedback effect of a productive economy: if the average productivity is higher, technological improvements and meetings happen spontaneously at a higher rate.
Finally, the random Brownian component incorporate fluctuations in the efficiency of the production due to external contingent factors.
This model satisfies Assumption \ref{a.standing} under some 
appropriate regularity conditions on the parameters and initial distribution.

We refer to \cite{achdou} for further
examples of problems where the 
controlled process is only a diffusion without jump terms.

\section{$\sigma$-compactness of the state space}
\label{s.sigma}

Recall that
$\invSet := [0,T)\times\expmoments$, and  $\expmoments$ 
is the set of probability measures $\mu$ satisfying 
$\langle\mu,\exp(\delta|\cdot|)\rangle<\infty$,
where $\delta$ is as in \ref{ass:state}.
We endow this space with the subspace topology induced by $\mSpace$, 
i.e., weak$^*$ convergence.
We use the product topology on 
$\invSetCl{}: = [0,T]\times\expmoments$,
and emphasize that $\invSetCl{}$ is not the topological closure of $\invSet$, but simply includes the final time.

The space $\invSet$
has a  suitable $\sigma$-compact structure 
which is compatible
with the McKean--Vlasov dynamics.
This representation of $\invSet$
 is instrumental to obtain uniform integrability of the viscosity test 
functions as well as some continuity properties of the Hamiltonian. We continue
by constructing this structure.

For $\delta$ as in \ref{ass:state}, set
$$
e_\delta(x):= \exp(\delta [\sqrt{x^2+1} -1]), \quad x \in \R.
$$
We note that  $e_\delta$ is twice continuously differentiable and
$$
\exp(\delta [|x|-1]) \le e_\delta(x) \le  \exp(\delta|x|) \leq e^\delta e_\delta(x),
\quad \forall x \in \R.
$$
 For  $N \in \N$
 and $C_0,\delta$
as in Assumption \ref{a.standing}, let
\[
 \invSetN := \big\{(t,\mu) \in[0,T)\times\mSpace(\R)\mid \langle \mu,e_\delta\rangle\le
 Ne^{K^*t } \big\},
 \]
where 
\begin{equation}
\label{eq:K}
K^*= K^*(C_0,\delta):=
\frac{\delta C_0}{2}(2+C_0+\delta C_0) +C_0\bigg(\int_{\R}e^{\delta |x|}\gamma(\dif x)-1\bigg).
\end{equation}
The exact definition
of $K^*$ is not important for the functional analytic properties
of $\invSetN$ but is used centrally  
in the next lemma to prove an
invariance property.
  
It is clear that  
$\invSet = [0,T)\times\expmoments = \cup_{N=1}^\infty \invSetN$
and $\invSetCl{} = \cup_{N=1}^\infty \invSetNCl{}$, where
\[
\invSetNCl{}  := \big\{(t,\mu)\in[0,T]\times\mSpace(\R)\mid 
\langle \mu,e_\delta\rangle\le
 Ne^{K^*t } \big\}.
 \]
 We also use the following notation for a constant  $b>0$,
 \[
 \invMeas{b} := \big\{\mu \in \mSpace(\R)\mid \langle \mu,e_\delta\rangle\le b \big\}.
 \]
 
The following lemma shows that for each $N$,
$\invSetN$ and thus also $\invSet$, remains invariant under the controlled dynamics \eqref{eq:SDE}
for any control.
In particular, this means that for any given initial law $\mu \in \invSetN$,
we may restrict the dynamic programming equation \eqref{eq:DPE} 
to $\invSetN$.

\begin{lem}\label{lem:exp_mom}
Under Assumption \ref{a.standing}, for any $N \in \N$,
the set $\invSetN$ is invariant for the SDE \eqref{eq:SDE}, namely,
\[
(t,\mu)\in \invSetN \Rightarrow (u,\law(X^{t,\mu,\alpha}_u))\in 
\invSetN \quad \forall(u,\alpha)\in [t,T]\times \controlset,
\]
where $(X^{t,\mu,\alpha}_u)_{u\in[t,T]}$ is the solution to \eqref{eq:SDE} with initial condition 
$\law(X^{t,\mu,\alpha}_t)=\mu$.
\end{lem}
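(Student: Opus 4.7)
The natural route is to apply It\^{o}'s formula to $e_\delta(X^\alpha_s)$, show that its drift is pointwise dominated by $K^\ast e_\delta(X^\alpha_s)$, and then conclude via Gronwall's inequality. The exact value of the constant $K^\ast$ in \eqref{eq:K} is designed precisely to make this work.

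First I would record two pointwise bounds on the derivatives of $e_\delta$. A direct computation yields
\[
e_\delta^\prime(x)=\delta\,\tfrac{x}{\sqrt{x^2+1}}\,e_\delta(x),\qquad
e_\delta^{\prime\prime}(x)=\delta\!\left(\tfrac{1}{(x^2+1)^{3/2}}+\delta\,\tfrac{x^2}{x^2+1}\right)e_\delta(x),
\]
so that $|e_\delta^\prime(x)|\le\delta\,e_\delta(x)$ and $|e_\delta^{\prime\prime}(x)|\le\delta(1+\delta)\,e_\delta(x)$. For the jump term, the key estimate is that the map $x\mapsto\sqrt{x^2+1}$ is $1$-Lipschitz, hence
\[
e_\delta(x+y)\le e^{\delta|y|}e_\delta(x),\qquad x,y\in\R,
\]
so that, using \ref{ass:state},
\[
\int_\R\bigl(e_\delta(x+y)-e_\delta(x)\bigr)\gamma(\dif y)\le\bigl(\textstyle\int_\R e^{\delta|y|}\gamma(\dif y)-1\bigr)e_\delta(x).
\]
Combining these with $|b|,|\sigma|,|\lambda|\le C_0$ from \ref{ass:bounded}, the infinitesimal generator $\diffop^{a,\mu}_s$ of \eqref{eq:SDE} satisfies $\diffop^{a,\mu}_s[e_\delta](x)\le K^\ast e_\delta(x)$, matching the two summands in the definition of $K^\ast$.

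Next I would apply It\^{o}'s formula to $e_\delta(X^\alpha_s)$ between $s=t$ and $s=u\wedge\tau_n$, where $\tau_n:=\inf\{s\ge t:|X^\alpha_s|\ge n\}$. On each stopped interval the local-martingale contributions from the Brownian and compensated Poisson integrals are genuine martingales of zero mean (the integrands are bounded by $\delta\,e_\delta(n)C_0$ and similar), so taking expectations yields
\[
\E\bigl[e_\delta(X^\alpha_{u\wedge\tau_n})\bigr]\le\langle\mu,e_\delta\rangle+K^\ast\int_t^u\E\bigl[e_\delta(X^\alpha_{s\wedge\tau_n})\bigr]\dif s.
\]
Gronwall's lemma then gives $\E[e_\delta(X^\alpha_{u\wedge\tau_n})]\le\langle\mu,e_\delta\rangle\,e^{K^\ast(u-t)}$ uniformly in $n$. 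By Fatou's lemma (since $X^\alpha_{u\wedge\tau_n}\to X^\alpha_u$ almost surely as $n\to\infty$, by non-explosion of the SDE obtained in the preceding well-posedness theorem),
\[
\langle\law(X^{t,\mu,\alpha}_u),e_\delta\rangle=\E[e_\delta(X^\alpha_u)]\le\langle\mu,e_\delta\rangle\,e^{K^\ast(u-t)}\le Ne^{K^\ast t}e^{K^\ast(u-t)}=Ne^{K^\ast u},
\]
which is precisely the membership $(u,\law(X^{t,\mu,\alpha}_u))\in\invSetN$.

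The only delicate point is the localization step: one has to verify a priori that $\E[e_\delta(X^\alpha_{u\wedge\tau_n})]$ is finite in order to take expectations (this is automatic after stopping since $|X^\alpha_{s\wedge\tau_n}|\le n+|\Delta J_{\tau_n}|$ and $\xi$ has exponential moment by \ref{ass:state}), and to ensure that the martingale pieces have zero mean, which just requires their integrands to have finite second moment up to $\tau_n$; both are standard after stopping. Apart from this routine bookkeeping, the argument is essentially the infinitesimal-generator computation above.
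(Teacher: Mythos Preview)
Your proof is correct and follows essentially the same approach as the paper: apply It\^{o}'s formula to $e_\delta(X^\alpha_s)$, use the pointwise bounds $|e_\delta'|\le\delta e_\delta$, $|e_\delta''|\le\delta(1+\delta)e_\delta$, and $e_\delta(x+y)\le e^{\delta|y|}e_\delta(x)$ to dominate the drift by $K^\ast e_\delta$, localize, take expectations, and conclude with Gronwall plus Fatou. The paper writes the same computation via the auxiliary function $\varphi(x)=\sqrt{x^2+1}-1$ (so $e_\delta=e^{\delta\varphi}$) rather than differentiating $e_\delta$ directly, and uses a generic localizing sequence instead of your explicit $\tau_n$, but the argument and the resulting constant are identical.
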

\begin{proof}
Set $\varphi(x):=\sqrt{x^2+1} -1$ so that
$$
e_\delta(x)=  e^{\delta \varphi(x)}, \quad x \in \R.
$$
It is clear that $\varphi$ is twice continuously differentiable and
both  $|\varphi'|$ and $\varphi'' >0$ 
are bounded by $1$.

Fix $(t,\mu)\in\invSetN$ and $\alpha \in \controlset$.
For $u\in[t,T]$,
set $Y_u:=e^{\delta \varphi(X_u)}$,
$\mu_u:=\law(X_u)$,
where $X_u:=X^{t,\mu,\alpha}_u$.
In particular, $\mu_t = \mu$ for any control $\alpha$
and by  It\^{o}'s Formula,
\begin{alignat*}{2}
    Y_u&={}&& Y_t+\int_t^u b(s,\mu_s,\alpha_s)
     \delta \varphi'(X_s) Y_s\dif{s} \\
     &&&+\frac{1}{2}\int_t^u\sigma^2(s,\mu_s,\alpha_s)
     \big[\delta\varphi''(X_s)+\delta^2(\varphi'(X_s))^2\big]Y_s\dif{s}\\
     &&&+\int_t^u\sigma(s,\mu_s,\alpha_s)
     \delta\varphi'(X_s)Y_s \dif W_s+\sum_{0\le s\le t}\Delta Y_s.
\end{alignat*}
In view of Assumption \ref{ass:bounded}, the stochastic integral in the above formula is a local martingale.
We  take expectation on both side up to a localizing sequence of stopping times 
$\{\tau_n\}_n$.
We also use Assumption \ref{ass:bounded}, to estimate
the expectation of the second and third term of the above sum 
are bounded by
\[
C_1\E\bigg[\int_t^{u} Y_{s\wedge\tau_n}\dif{s}\bigg],
\]
where $C_1:=\frac{\delta C_0}{2}(2+C_0+\delta C_0)$ and $C_0$ is as in Assumption \ref{ass:bounded}.

We next estimate  $e_J:=\E[\sum_{0\le s\le t\wedge\tau_n}\Delta Y_s]$.
First observe that for any $x,y \in \R$,
$|\varphi(y+x) -\varphi(y)| \le |x|$.
We then estimate by using Assumption \ref{ass:state}, 
\begin{align*}
e_J&=\E\int_t^{u\wedge\tau_n}\lambda(s,\mu_s,\alpha_s) 
\int_\R e^{\delta\varphi(X_{s\wedge\tau_n}+x)}
-e^{\delta\varphi(X_{s\wedge\tau_n})}\gamma(\dif x)\dif{s}\\
&\le C_0\E\int_t^{u\wedge\tau_n}Y_{s\wedge\tau_n}\int_\R 
\big(e^{\delta |x|}-1\big)\gamma(\dif x)\dif{s}\\
&\le C_2\E\int_t^{u}Y_{s\wedge\tau_n}\dif{s},
\end{align*}
where $C_2:=C_0 \big(\int_{\R}e^{\delta|x|}\gamma(\dif x)-1\big)$.
These and Fubini's Theorem imply that
\[
\E[Y_{u\wedge\tau_n}] \le \E[Y_t]+K^*  \int_t^{u} \E\big[Y_{s\wedge\tau_n}\big]\dif{s},
\]
where $K^*$ is as in \eqref{eq:K}. 
By Gronwall's Lemma and Fatou's Lemma, 
\[
\E[Y_u]\le e^{K^*(u-t)} \E[Y_t]
=e^{K^*(u-t)}
\ \langle \mu, e^{\delta \varphi}\rangle= 
e^{K^*(u-t)}\langle \mu, e_\delta \rangle.
\]
As $(t,\mu) \in \invSetN$, $\langle \mu, e_\delta \rangle \le N e^{K^*t}$.  Hence,
$$
\E[Y_u]\le e^{K^*(u-t)}\langle \mu, e_\delta \rangle
\le e^{K^*u}.
$$
\end{proof}

We provide the proof of the following simple result for
completeness.

\begin{lem}\label{lem:compactness}
For $N \in \N$,
$\invSetNCl{}$ is a compact subset of $[0,T]\times\mSpace(\R)$.
\end{lem}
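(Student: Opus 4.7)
My plan is to establish compactness by combining tightness (to get relative compactness from Prokhorov) with weak$^*$ lower semi-continuity of $\mu\mapsto\langle\mu,e_\delta\rangle$ (to get closedness), together with the compactness of $[0,T]$.

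First I would show that $\invSetNCl{}$ is relatively compact in $[0,T]\times\mSpace(\R)$. Every $(t,\mu)\in\invSetNCl{}$ satisfies the uniform bound
\[
\langle\mu,e_\delta\rangle\le Ne^{K^*t}\le Ne^{K^*T}=:M.
\]
Since $e_\delta(x)\ge\exp(\delta(|x|-1))\to\infty$ as $|x|\to\infty$, Markov's inequality yields, for every $R>0$,
\[
\mu\bigl(\{|x|>R\}\bigr)\le\frac{\langle\mu,e_\delta\rangle}{\exp(\delta(R-1))}\le\frac{M}{\exp(\delta(R-1))},
\]
uniformly in $\mu$. Hence the family $\{\mu:(t,\mu)\in\invSetNCl{}\}$ is tight and thus relatively compact in the weak$^*$ topology by Prokhorov's theorem. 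Combined with the compactness of $[0,T]$, this gives relative compactness of $\invSetNCl{}$ in the product topology.

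Next I would show that $\invSetNCl{}$ is closed. Take a sequence $(t_n,\mu_n)\in\invSetNCl{}$ converging to $(t,\mu)\in[0,T]\times\mSpace(\R)$. Because $e_\delta$ is continuous and non-negative, the map $\nu\mapsto\langle\nu,e_\delta\rangle$ is lower semi-continuous under weak$^*$ convergence (apply Fatou's lemma to the truncations $e_\delta\wedge k$, which are bounded and continuous, then let $k\to\infty$). Combined with continuity of $s\mapsto Ne^{K^*s}$, this yields
\[
\langle\mu,e_\delta\rangle\le\liminf_{n\to\infty}\langle\mu_n,e_\delta\rangle\le\liminf_{n\to\infty}Ne^{K^*t_n}=Ne^{K^*t},
\]
so $(t,\mu)\in\invSetNCl{}$. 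Since a relatively compact, closed subset of a Hausdorff space is compact, the result follows.

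The only mildly delicate point is the lower semi-continuity of $\nu\mapsto\langle\nu,e_\delta\rangle$ for the unbounded continuous function $e_\delta$, but this is standard once one passes through bounded truncations; everything else is a direct application of tightness and Prokhorov's theorem.
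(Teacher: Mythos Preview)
Your proof is correct and follows essentially the same approach as the paper: tightness from the uniform exponential-moment bound gives relative compactness via Prokhorov, and closedness follows from the weak$^*$ lower semi-continuity of $\mu\mapsto\langle\mu,e_\delta\rangle$ obtained by truncating $e_\delta$ to bounded continuous functions and passing to the limit. The only cosmetic difference is that the paper bounds the tail mass by $b/R$ using $e_\delta(x)/|x|$, whereas you use Markov's inequality with the exponential lower bound on $e_\delta$; both yield the same conclusion.
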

\begin{proof}
For and $b>0$ and $R$ sufficiently large,
\[
\sup_{\mu\in\invMeas{b}}\mu([-R,R]^c) \le \sup_{\mu\in\invMeas{b}}
\int_{|x|\ge R}\frac{e_\delta(x)}{|x|}\mu(\dif x)\le \frac{b}{R}.
\]
and the last term converges to $0$ as $R\to\infty$.
Hence $\invMeas{b}$ is tight and by Prokhorov's Theorem,
it is relatively compact.
We next show that it is also closed.
Consider a sequence $\{\mu_n\}_{n\in\N}\subset\invMeas{b}$ such that $\mu_n\to\mu$.
Set $f_m(x):=e_\delta(x) \wedge m$.  
 Since $f_m\in\cC_b(\R)$, $f_m \le e_\delta(x)$ and $\mu_n \in \invMeas{b}$, 
\[
\langle \mu, f_m\rangle = \lim_{n \to \infty}\langle \mu_n, f_m\rangle \le b, \quad \forall m>0.
\]
By monotone convergence theorem,
\[
\langle \mu, e_\delta \rangle = \lim_{m \to \infty}\langle \mu, f_m\rangle \le b.
\]
Hence, $\mu \in \invMeas{b}$ and consequently, $\invMeas{b}$ is compact.

For every $N$, $\invSetNCl{}$ is a subset of $[0,T]\times
\invMeas{N e^{K^*t }}$, hence, it is relatively compact.
Consider a sequence $\{(t_n,\mu_n)\}_{n\in\N}\subset\invSetNCl{}$ 
such that $(t_n,\mu_n)\to(t,\mu)$.
Proceeding exactly as above, we can show that
\[
\langle \mu, e_\delta \rangle = 
\lim_{n \to \infty} \lim_{m \to \infty}\langle \mu_n, f_m\rangle \le N e^{K^* t}.
\]
Hence, $(t,\mu)\in \invSetNCl{}$ and consequently, $\invSetNCl{}$ is compact.
\end{proof}

We close this section by recalling a well-known result;  
see~\cite[Theorem 30.1]{Billingsley}.
Suppose $\mu, \nu \in \cM$.  Then,
\begin{equation}
\label{eq:seperate}
\mu=\nu \quad
\Leftrightarrow \quad
\langle \mu-\nu,x^j\rangle =0, \ \ \forall \ j=1,2,\ldots
\end{equation}

\section{Viscosity solutions and test functions}
\label{s.test}

In this section,
we define viscosity sub and super-solutions
to the dynamic programming equation  \reff{eq:DPE}.
As it is standard in the viscosity theory, 
one has to first specify the class of test functions.
We continue by this selection.

\begin{dfn}
A \emph{cylindrical function} is a map of the form $(t, \mu) \mapsto F(t,\langle\mu,f\rangle)$ 
for some function $f:\R\to\R$ and  $F:[0,T]\times\R\to\R$.
This function is called \emph{cylindrical polynomial} if $f$ is a polynomial
and $F$ is continuously differentiable.
\end{dfn}

The above class is not  large
enough and we extend it to
its linear span.

\begin{dfn}
\label{def:test}
For $E \subseteq \invSetCl{}$, a \emph{viscosity test function on $E$} is a
function of the form 
\[
\varphi(t,\mu)=\sum_{j=1}^\infty \varphi_j(t,\mu),
\quad (t,\mu) \in E,
\]
where $\varphi_j$ are cylindrical polynomials
and for every $N \in \N$,
\begin{equation}
\label{eq:sum}
\lim_{M\to \infty} 
\sum_{j=M}^\infty \sup_{(t,\mu) \in E } \sum_{i=0}^{\deg{(\der {\varphi_j}})} 
\left| \langle \mu, (\der {\varphi_j})^{(i)} \rangle \right| =0.
\end{equation}
We let $\test_E$ be the set of all viscosity test functions on $E$.
\end{dfn}

 Lemma \ref{lem:continuity} below shows that
for a cylindrical polynomial $\varphi$, 
$ \langle \mu, (\der {\varphi})^{(i)}\rangle$
is uniformly bounded on $(t,\mu,a) \in \invSetNCl{} \times A$
for every $i=0,\ldots, \deg{ (\der {\varphi})}$.
Therefore, they are test functions on every $\invSetNCl{}$.

\begin{rem}
\label{r.motivation}
There are several other choices for test functions.  
In particular, we could even restrict $F$ to be 
quadratic or extended to be more general
with some integrability properties.
They all would yield equivalent definitions
and we do no pursue this equivalence here.

When $\mu_t$ is the law of
a stochastic process $X_t$ and
$\varphi$ is  a cylindrical function, 
$\varphi(t,\mu_t)
=F(t,\langle\mu_t,f\rangle)=F(t,\E[f(X_t)])$.
Then, one can employ the standard It{\^{o}} formula;
see Proposition \ref{lem:Hcontinuous} below.
\qed
\end{rem}

\begin{dfn}
 For $E \subseteq \invSetCl$ and $(t,\mu) \in E$ with $t<T$, the
 \emph{superjet} of $u$ at $(t,\mu)$ is given by,
 \[
 \supdiff{E} u(t,\mu):=\big\{(\partial_t \varphi(t,\mu), 
 \der\varphi(t,\mu,\cdot))\mid \varphi \in \test_E,  
 (u-\varphi)(t,\mu) = \max_E (u-\varphi)\big\}.
\]
The \emph{subjet} of $u$ at $(t,\mu)$ 
is defined as $\subdiff{E} u(t,\mu):=-\supdiff{E}(-u)(t,\mu)$.
\end{dfn}

\begin{dfn}
On a subspace $E \subseteq \invSetCl$,
the \emph{(sequential) upper semicontinuous envelope} of $u$ on $E$
is defined by\footnote{As $\invSet$ is first countable, semicontinuity coincides with sequential semicontinuity.}
$$
u^*_E(t,\mu):=\limsup_{E \owns (t',\mu')\to(t,\mu)}u(t,\mu),
$$
where the $\limsup$ is taken over all sequences in $E$ converging to $(t, \mu)$.
The \emph{lower semicontinuous envelope} $u_*^E$ is defined analogously.
\end{dfn}

We use the compact notations
\[
u^*:= u^*_{\invSetCl}, \ \ 
u_*:= u_*^{\invSetCl}, \quad 
u^*_N:= u^*_{\invSetNCl{}}, \ \ 
u_*^N:= u_*^{\invSetNCl{}}.
\]
We note that as opposed to the finite-dimensional cases,
when $u$ is not continuous,
the dependence of $u^*_N$ and $u_*^N$ on $N$ is non-trivial.  
This emanates from the 
fact that the interiors of all $\invSetN$ are empty.

To simplify the notation, we write $H = \sup_{a \in A} H^a$.

\begin{dfn} 
We say that a function $u:\invSetN\to\R$ is a
\emph{viscosity sub-solution} of \eqref{eq:DPE} on $\invSetN$, if for every $(t,\mu)\in\invSetN$,
\begin{align*}
-\pi_t + H(t,\mu,\pi_\mu)&\le 0\qquad \forall(\pi_t,\pi_\mu)\in  \supdiffN u^*_N(t,\mu).
\end{align*}
We say that a function $v:\invSetN\to\R$ is a
\emph{viscosity super-solution}
of \eqref{eq:DPE} on $\invSetN$, if for every $(t,\mu)\in\invSetN$,
\begin{align*}
-\pi_t + H(t,\mu,\pi_\mu)&\ge 0\qquad \forall(\pi_t,\pi_\mu)\in  \subdiffN u_*^N(t,\mu).
\end{align*}
A \emph{viscosity solution} of \eqref{eq:DPE} is a 
function on $\invSet$ that is both a sub-solution and a super-solution 
of \eqref{eq:DPE} on $\invSetN$, for every $N\in\N$. 
\end{dfn}

We continue with several  technical results.
Ultimately, we want to show some continuity properties of $H$.

\begin{dfn}
We say that $g$ has 
$\delta$-subexponential growth if 
$|g(x)x|\le \hat{C} e^{\delta|x|}$ for some $\hat{C}>0$ and every $x\in\R$.
\end{dfn}
Note that any polynomial has 
$\delta$-subexponential growth.
\begin{lem}
\label{lem:continuity}
Let $\delta>0$ be as in \ref{ass:state}.
For any $g$ with $\delta$-subexponential growth,
$$
\sup_{\mu\in\invMeas{b}}\langle\mu,|g|\rangle<\infty,
\quad
{\text{and}}
\quad
\lim_{R \to \infty} \sup_{\mu\in\invMeas{b}}\int_{|x|\ge R}|g(x)|\mu(\dif x)= 0.
$$
Moreover, there is a constant $C$, depending
only on the constants appearing in Assumption \ref{a.standing},
such that for any cylindrical polynomial $\varphi$ and $N \in \N$,
\begin{equation}
\label{eq:1}
\sup_{a \in A, (t,\mu) \in \invSetNCl{}} \left|
\langle\mu, \diffop^{a,\mu}_t[\der\varphi]\rangle \right| \le
C \sup_{(t,\mu) \in \invSetNCl{}} \sum_{i=0}^{\deg{(\der {\varphi}})} 
\left| \langle \mu, (\der {\varphi})^{(i)} \rangle \right| <\infty.
\end{equation}
\end{lem}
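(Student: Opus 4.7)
The two assertions are proved separately, with the jump term of the operator being the genuine difficulty.

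For the first, I split the integral at $|x|=1$. On $\{|x|\ge 1\}$, the growth condition $|g(x)x|\le \hat C e^{\delta|x|}$ yields $|g(x)| \le \hat C e^{\delta}e_\delta(x)$, so this piece is uniformly bounded by $\hat C e^{\delta} b$ for $\mu\in \invMeas{b}$; on $\{|x|<1\}$, the functions to which the lemma will be applied (polynomials and their derivatives) are bounded on compacts, giving a uniform constant bound. The same reasoning with the sharper estimate $|g(x)|\le \hat C e^{\delta}e_\delta(x)/R$ on $\{|x|\ge R\ge 1\}$ gives the tail bound $\hat C e^{\delta} b/R$, which vanishes as $R\to\infty$.

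For the second, I write $\varphi(t,\mu) = F(t,\langle\mu,f\rangle)$ with $f$ a polynomial and $F$ continuously differentiable. By Remark~\ref{rmk:derFrechet}, $\der\varphi(t,\mu,x) = \partial_y F(t,\langle\mu,f\rangle)\,f(x)$ and $(\der\varphi)^{(i)}(t,\mu,x) = \partial_y F(t,\langle\mu,f\rangle)\,f^{(i)}(x)$. Unfolding $\diffop^{a,\mu}_t[\der\varphi]$ and pairing with $\mu$, the drift and diffusion contributions are $b\,\langle\mu,(\der\varphi)^{(1)}\rangle$ and $\tfrac12\sigma^2\langle\mu,(\der\varphi)^{(2)}\rangle$, each bounded via \ref{ass:bounded} by a constant ($C_0$ or $\tfrac12 C_0^2$) times the corresponding term on the right of \eqref{eq:1}. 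The jump piece is the crux: since $f$ is polynomial, its Taylor expansion at $x$ truncates,
\[
f(x+y)-f(x) = \sum_{k=1}^{\deg f}\frac{y^k}{k!}\,f^{(k)}(x),
\]
so integrating against $\gamma(dy)$ and then $\mu(dx)$ and factoring out $\partial_y F$ produces $\lambda\sum_{k=1}^{\deg f}\tfrac{\langle\gamma,y^k\rangle}{k!}\langle\mu,(\der\varphi)^{(k)}\rangle$. The elementary inequality $|y|^k/k!\le \delta^{-k}e^{\delta|y|}$ combined with \ref{ass:state} controls each $|\langle\gamma,y^k\rangle|/k!$, while \ref{ass:bounded} bounds $\lambda$, completing the estimate.

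Finiteness of the right-hand side of \eqref{eq:1} reduces to the product $|\partial_y F(t,\langle\mu,f\rangle)|\cdot|\langle\mu,f^{(i)}\rangle|$: the second factor is uniformly bounded on $\invSetNCl{}$ directly by part (a), while the uniform-integrability conclusion of part (a) combined with weak$^*$ convergence makes $\mu\mapsto\langle\mu,f\rangle$ continuous on the compact set $\invSetNCl{}$ (Lemma~\ref{lem:compactness}), so $\partial_y F$ is bounded as a continuous function on a compact image. The principal obstacle is the jump contribution: termination of the Taylor series for a polynomial $f$ is essential, since a merely smooth $f$ of exponential growth would yield an infinite series whose tail is not controllable by \ref{ass:state} alone.
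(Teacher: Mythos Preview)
Your proof is correct and follows essentially the same route as the paper: control the tail via the subexponential growth bound against $e_\delta$, decompose $\diffop^{a,\mu}_t$ into its three pieces, and handle the jump term by the finite Taylor expansion of the polynomial $f$. You supply slightly more detail than the paper (the explicit moment bound $|y|^k/k!\le \delta^{-k}e^{\delta|y|}$ for $\gamma$, and the compactness argument for finiteness of the right-hand side via continuity of $\partial_y F$ on the image of $\invSetNCl{}$), but the structure is identical.
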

\begin{proof}
The estimate $\sup_{\mu\in\invMeas{b}}\langle\mu,|g|\rangle<\infty$
follows directly from Assumption \ref{a.standing} and the definition of $\invMeas{b}$.
Indeed, since $g$ has $\delta$-subexponential growth,
$$
|g(x)x|\le \hat{C} \exp(\delta |x|) \le \hat{C} e^{\delta} e_\delta(x)=: \tilde{C} e_\delta(x),
\quad x \in \R.
$$
Hence, for $R\ge 1$,
\begin{align*}
\sup_{\mu\in\invMeas{b}}\int_{|x|\ge R}|g(x)|\mu(\dif x) 
&\le \tilde{C} \sup_{\mu\in\invMeas{b}}\int_{|x|\ge R}\frac{e_\delta(x)}{|x|}\mu(\dif x)\\
&\le \frac{\tilde{C}}{R}\sup_{\mu\in\invMeas{b}}\int_{|x|\ge R}e_\delta(x)\mu(\dif x)\\
&\le \frac{b\tilde{C}}{R}.
\end{align*}

Let $f$ be a polynomial. Then,
\begin{align*}
    \langle\mu, \diffop^{a,\mu}_t[f]\rangle=
    b(t,\mu,a) \langle\mu,f^\prime \rangle &+\frac{1}{2}\sigma^2(t,\mu,a) \langle\mu,f^{\prime \prime}\rangle \\
    &+\lambda(t,\mu,a)\langle\mu,\int_{\R}(f(x+y)-f(x))\gamma(dy)\rangle.
\end{align*}
We rewrite the last term by Taylor expansion of the polynomial $f$ as follows,
$$
\langle\mu,\int_{\R}(f(x+y)-f(x))\gamma(dy)\rangle=
\sum_{i=1}^{\deg(f)}\frac{\langle\mu,f^{(i)}\rangle}{i!}\int_{\R}y^i\gamma(dy).
$$
The above equations, together with Assumption \ref{a.standing} and the fact that all derivatives of $f$ have $\delta$-subexponential growth, imply \eqref{eq:1}.
The result for a cylindrical polynomial follows similarly.
\end{proof}

\begin{prop}
\label{lem:Hcontinuous}
For every $\varphi \in \test_{\invSetN}$, $(t,\mu) \in \invSetN$ 
and $\alpha \in \cA$, 
\begin{equation}
\label{eq:kolmogrov}
\varphi(u,\mu_u)
= \varphi(t,\mu)
+ \int_{t}^u \left[\partial_t \varphi(s,\mu_s)+
\langle \mu_s, \diffop^{\alpha_s,\mu_s}_s[D_m\varphi]\rangle\right] \dif{s},
\quad u \in [t,T],
\end{equation}
where $\mu_s=\law(X^{t,\mu,\alpha}_s)$ and
 $(X^{t,\mu,\alpha}_s)_{s\in[t,T]}$ is the solution to \eqref{eq:SDE} with initial
distribution $\mu$.
Moreover, the map $(t, \mu) \mapsto H(t, \mu, \der \varphi)$ 
is continuous on $\invSetN$. 
\end{prop}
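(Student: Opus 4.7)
My plan is to handle the two claims in sequence, first reducing each to the case of a single cylindrical polynomial and then passing to the infinite-sum test functions of Definition \ref{def:test}.

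For \eqref{eq:kolmogrov}, I start with a cylindrical polynomial $\varphi(t,\mu)=F(t,\langle\mu,f\rangle)$ where $F$ is continuously differentiable and $f$ is a polynomial. Lemma \ref{lem:exp_mom} keeps the path $s\mapsto\mu_s=\law(X_s^{t,\mu,\alpha})$ inside $\invSetN$, so all polynomial moments of $X_s$ remain uniformly bounded on $[t,T]$. Applying the classical It\^{o} formula to $f(X_s)$, localizing the Brownian and compensated-jump integrals, and then taking expectation gives
\[
m(u):=\langle\mu_u,f\rangle=\langle\mu,f\rangle+\int_t^u\langle\mu_s,\diffop^{\alpha_s,\mu_s}_s[f]\rangle\,\dif s,\qquad u\in[t,T],
\]
so that $m$ is absolutely continuous with $m'(s)=\langle\mu_s,\diffop^{\alpha_s,\mu_s}_s[f]\rangle$. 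By Remark \ref{rmk:derFrechet}, $\der\varphi(s,\mu,x)=F_y(s,\langle\mu,f\rangle)f(x)$, and linearity of $\diffop^{a,\mu}_t$ in its function argument yields $\langle\mu_s,\diffop^{\alpha_s,\mu_s}_s[\der\varphi]\rangle=F_y(s,m(s))\,m'(s)$. The ordinary chain rule applied to $s\mapsto F(s,m(s))$ then produces \eqref{eq:kolmogrov} in this cylindrical case. To pass to a general $\varphi=\sum_j\varphi_j\in\test_{\invSetN}$, I apply the identity to each partial sum $\varphi^{(M)}=\sum_{j\le M}\varphi_j$; the summability condition \eqref{eq:sum} combined with the bound \eqref{eq:1} of Lemma \ref{lem:continuity} makes the integrand $\partial_t\varphi^{(M)}+\langle\mu_s,\diffop^{\alpha_s,\mu_s}_s[\der\varphi^{(M)}]\rangle$ Cauchy uniformly in $s\in[t,T]$, so dominated convergence passes the identity to the limit.

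For the continuity of $(t,\mu)\mapsto H(t,\mu,\der\varphi)$ on $\invSetN$, the same summability reduction lets me assume $\varphi$ is a single cylindrical polynomial. Expanding $\langle\mu,\diffop^{a,\mu}_t[\der\varphi]\rangle$ via the Taylor identity used inside the proof of Lemma \ref{lem:continuity}, this quantity depends continuously on $t$, on the finite moments $\{\langle\mu,f^{(i)}\rangle\}_{i\le\deg f}$, and on $b(t,\mu,a)$, $\sigma(t,\mu,a)$, $\lambda(t,\mu,a)$. Assumption \ref{ass:convergence} gives continuity of these last three in $(t,\mu)$ with modulus \emph{uniform in $a\in A$} and depending on $\mu$ only through the finite collection $\{\langle\mu,x^i\rangle\}_{i\in\cI}$; Assumption \ref{ass:cost} does the same for $L$. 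The whole question therefore reduces to showing that whenever $(t_n,\mu_n)\to(t,\mu)$ in $\invSetN$ one has $\langle\mu_n,g\rangle\to\langle\mu,g\rangle$ for every polynomial $g$. This is where the $\sigma$-compact structure from Section \ref{s.sigma} is decisive: the tail estimate at the end of the proof of Lemma \ref{lem:continuity} yields uniform $g$-integrability on $\invMeas{Ne^{K^*T}}$ for any $g$ of $\delta$-subexponential growth, thereby upgrading weak$^{\ast}$ convergence to convergence of unbounded polynomial moments. With $H^a$ thus continuous in $(t,\mu)$ uniformly in $a$, taking the supremum preserves continuity.

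The principal obstacle in both parts is exactly this conversion from weak$^{\ast}$ convergence---the only topology $\mSpace(\R)$ carries---to convergence of unbounded polynomial moments, which is what the generator $\diffop^{a,\mu}_t$ and the It\^{o} calculations require. The exponential-moment definition of $\invSetN$ together with the invariance property of Lemma \ref{lem:exp_mom} is what delivers the uniform integrability needed; once this is in hand, the remaining work is a routine combination of classical It\^{o} calculus, the chain rule, and the uniform summability supplied by Definition \ref{def:test}.
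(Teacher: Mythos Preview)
Your proposal is correct and follows essentially the same route as the paper: first establish \eqref{eq:kolmogrov} for a single cylindrical polynomial via It\^{o}'s formula on $f(X_s)$ followed by the ordinary chain rule for $F(s,\langle\mu_s,f\rangle)$, then pass to general $\varphi\in\test_{\invSetN}$ using \eqref{eq:sum} together with the bound \eqref{eq:1}; for the continuity of $H$, both you and the paper reduce to cylindrical polynomials, invoke the uniform-integrability part of Lemma~\ref{lem:continuity} to turn weak$^*$ convergence into convergence of polynomial moments, use \ref{ass:convergence} and \ref{ass:cost} for uniformity in $a$, and then appeal to the uniform convergence $H(\cdot,\cdot,\der\varphi^M)\to H(\cdot,\cdot,\der\varphi)$ to conclude.
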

\begin{proof}
Fix $\varphi \in \test_{\invSetN}$, 
$(t,\mu) \in \invSetN$ and $\alpha \in \cA$
and let $\mu_s$ be as in the statement.
In view of Lemma \ref{lem:exp_mom}, $\mu_s \in \invSetN$
for all $s \in [t,T]$.

Let first $\varphi(\mu)=\langle\mu,f\rangle$ with $f$ polynomial, so that $D_m\varphi=f$ and $\langle \mu_s, f \rangle = \E f(X^{t,\mu,\alpha}_s)$. By stochastic calculus
$$
\langle \mu_u, f \rangle = \langle \mu, f \rangle 
+\int_{t}^u \langle \mu_s, \diffop^{\alpha_s,\mu_s}_s[f]\rangle \dif{s}.
$$
Moreover, this derivative is uniformly bounded on $\invSetN$ by the 
previous lemma.
Now consider a 
cylindrical polynomial $\varphi(t,\mu)=F(t,\langle\mu,f\rangle)$.
By calculus,
$$
\varphi(u,\mu_u) = \varphi(t,\mu) 
+\int_{t}^u \left[\partial_t \varphi(s,\mu_s) 
+F_x(s,\langle\mu_s,f\rangle) \langle \mu_s, \diffop^{\alpha_s,\mu_s}_s[f]\rangle
\right] \dif{s}.
$$
Since $D_m\varphi(s,\mu)= F_x(s,\langle\mu,s\rangle) f$, 
the above proves \eqref{eq:kolmogrov}
for cylindrical polynomials.
For a general $\varphi \in \test_{\invSetN}$, \eqref{eq:kolmogrov}
follows directly from above, the condition
\eqref{eq:sum} and the fact that $\mu_s \in \invSetN$
for all $s \in [t,T]$.

We now show continuity of $H$.
Since all derivatives of $f$ have $\delta$-subexponential growth,
Lemma \ref{lem:continuity} and the fact that $\varphi$ is a 
smooth function imply $\langle\mu, (\der\varphi)^{(i)}\rangle$ 
is continuous on every $\invSetN$, for any $i\in\N$.
In particular the uniform continuity of 
$(t, \mu) \mapsto \langle\mu, \diffop^{a,\mu}_t[\der\varphi(t,\mu)]\rangle$ 
follows from \ref{ass:bounded} and \ref{ass:convergence} and for $L$ it is assumption \ref{ass:cost}.
Hence, $H(t,\mu, \der \varphi)$ is continuous  for all cylindrical polynomials.
This continuity extends directly to all functions of the type
$\varphi^M:=\sum_{j=1}^M F_j (\langle \mu,f_j\rangle)$.

Now consider a general
test function 
$\varphi=\sum_{j=1}^\infty F_j (\langle \mu,f_j\rangle)$ and 
for $M \in \N$ set $\varphi^M:=\sum_{j=1}^M F_j (\langle \mu,f_j\rangle)$.
Since $\varphi \in \test_{\invSetN}$,
it satisfies \eqref{eq:sum}.  This together with
\eqref{eq:1} imply that
$$
\lim_{M\to \infty} \sup_{a \in A, (t,\mu) \in \invSetNCl{}  }
\sum_{j=M}^\infty  \left|\langle\mu, \diffop^{a,\mu}_t[\der\varphi_j]\rangle \right| =0.
$$
The above uniform limit enables us
to conclude that 
 $H(t,\mu, \der \varphi^M)$
converges uniformly to $H(t,\mu, \der \varphi)$
as $M$ tends to infinity.
Hence, $H(t,\mu, \der \varphi)$ is also
continuous.
\end{proof}

\section{Value function}
 \label{s.DPE_viscosity}
 
In this section we show that 
the value function $V$ is a viscosity solution to \eqref{eq:DPE}.
We start with two technical lemmata.

\begin{lem}
\label{lem:strictmax}
 For every $N\in\N$, $(t_0, \mu_0) \in \invSetN{}$, there exists a viscosity test function
 $\phi \in \test_{\invSetN{}}$
 such that $\phi(t,\mu) \geq 0$, with equality only in $(t_0, \mu_0)$,
 and
 $$
 (\phi(t_0,\mu_0),\partial_t \phi(t_0, \mu_0), \der \phi(t_0, \mu_0, \cdot)) = (0, 0, 0).
 $$
 In particular, in the definition of viscosity sub and super-solutions, 
 without loss of generality, 
we may assume that the extrema are strict.
\end{lem}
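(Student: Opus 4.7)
The plan is to construct $\phi$ explicitly as a time-penalty plus a weighted infinite series of squared moment deviations from $\mu_0$. Setting $m_j := \langle \mu_0, x^j\rangle$, I define
$$
\phi(t,\mu) := (t-t_0)^2 + \sum_{j=1}^{\infty} c_j\bigl(\langle \mu, x^j\rangle - m_j\bigr)^2,
$$
for strictly positive constants $c_j$ chosen small enough as specified below. Each summand is a cylindrical polynomial: for $j \geq 1$ take $f_j(x) = x^j$ and $F_j(t,y) = c_j(y - m_j)^2$, while the time term corresponds to $F_0(t,y) = (t - t_0)^2$ with trivial $\mu$-dependence. Thus, pending verification of \eqref{eq:sum}, $\phi$ is a candidate element of $\test_{\invSetN}$.

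The desired pointwise properties follow by inspection. Non-negativity holds since every summand is a square. If $\phi(t,\mu) = 0$, then $t = t_0$ and $\langle \mu - \mu_0, x^j\rangle = 0$ for every $j \geq 1$, which by the moment-separation property \eqref{eq:seperate} yields $\mu = \mu_0$, giving uniqueness of the zero. Both derivatives vanish at $(t_0,\mu_0)$: clearly $\partial_t \phi(t_0, \mu_0) = 0$, and by the chain rule in Remark \ref{rmk:derFrechet},
$$
\der \varphi_j(\mu_0, x) = 2 c_j \bigl(\langle \mu_0, x^j\rangle - m_j\bigr) x^j = 0 \qquad \text{for each } j \geq 1,
$$
so $\der \phi(t_0,\mu_0,\cdot) \equiv 0$.

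The main technical point is choosing $c_j$ so that \eqref{eq:sum} is satisfied, and this is where the exponential-moment structure of $\invSetN$ enters. A direct computation gives $(\der \varphi_j)^{(i)}(\mu,x) = 2c_j(\langle \mu, x^j\rangle - m_j) \frac{j!}{(j-i)!} x^{j-i}$ for $0 \leq i \leq j$, and $0$ otherwise. By Lemma \ref{lem:continuity} applied to $g(x) = x^k$ (which has $\delta$-subexponential growth), the constants $B_k := \sup_{\mu \in \invMeas{Ne^{K^*T}}} |\langle \mu, x^k\rangle|$ are finite for every $k$, and the $\mu$-projection of $\invSetN$ is contained in $\invMeas{Ne^{K^*T}}$. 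Hence
$$
\sum_{i=0}^{j} \sup_{(t,\mu) \in \invSetN}\bigl|\langle \mu, (\der \varphi_j)^{(i)}\rangle\bigr| \leq 4 c_j B_j \sum_{i=0}^{j} \frac{j!}{(j-i)!} B_{j-i} =: c_j \Gamma_j,
$$
and taking, e.g., $c_j := 2^{-j}/\bigl(1 + \Gamma_j + (B_j + |m_j|)^2\bigr)$ both reduces the series in \eqref{eq:sum} to the tail of a convergent geometric series and ensures uniform convergence of the series defining $\phi$. Consequently $\phi \in \test_{\invSetN}$.

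For the final assertion, suppose $u - \varphi$ attains a maximum at $(t_0, \mu_0)$ for some $\varphi \in \test_{\invSetN}$, as in the definition of the superjet. Let $\tilde\varphi := \varphi + \phi$; the class $\test_{\invSetN}$ is closed under finite sums since the condition \eqref{eq:sum} for the combined cylindrical decomposition follows by concatenating the two summands' expansions. Since $\phi > 0$ off $(t_0,\mu_0)$, $u - \tilde\varphi$ attains a \emph{strict} maximum there. As $\phi(t_0,\mu_0)$, $\partial_t \phi(t_0,\mu_0)$ and $\der \phi(t_0,\mu_0,\cdot)$ all vanish, the jets of $\tilde\varphi$ and $\varphi$ at $(t_0,\mu_0)$ coincide; the same argument applies symmetrically to minima. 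Hence any superjet/subjet produced by a non-strict extremum is also produced by a strict one, and the sub/supersolution inequalities may be checked only against strict extrema.
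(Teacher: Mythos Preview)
Your proof is correct and takes essentially the same approach as the paper: build $\phi$ as a time penalty plus a weighted infinite sum of squared moment deviations $\langle\mu-\mu_0,x^j\rangle^2$, invoke \eqref{eq:seperate} for the uniqueness of the zero, and verify the test-function condition \eqref{eq:sum}. The only difference is cosmetic: the paper fixes the weights $c_j=\frac{1}{(j+1)2^j}$ and asserts the bound $2^{-j}K_N$ directly, whereas you absorb the $j$-dependent moment bounds $B_k$ into your choice of $c_j$, which makes the verification of \eqref{eq:sum} more transparent.
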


\begin{proof}
Fix $(t_0, \mu_0) \in \invSetN{}$ and set
$$
\phi(t,\mu)=\phi(t,\mu;t_0,\mu_0):=(t-t_0)^2+\sum_{j=1}^\infty \frac{1}{(j+1) 2^j}\langle \mu-\mu_0,x^j\rangle^2.
$$
By \eqref{eq:seperate}
$\phi(t,\mu)>0$ when $(t,\mu) \neq (t_0,\mu_0)$.
For any $j\in \N$, let $\varphi_j(\mu)=\frac{1}{(j+1) 2^j}\langle \mu-\mu_0,x^j\rangle^2$ and observe that
\[
 \sup_{(t,\mu) \in \invSetN{} } \sum_{i=0}^{\deg{(\der {\varphi_j}})} 
\left| \langle \mu, (\der {\varphi_j})^{(i)} \rangle \right| \le \frac{1}{2^j}K_N,
\]
for some constant $K_N$ which only depend on $\invSetN{}$.
It follows that $\phi$ satisfies \eqref{eq:sum}. 
It is clear that $\phi$ has all the claimed properties.
\end{proof}

\begin{lem}\label{lem:finite_env}
    For each $N$, $V, V^*_N$ and $V_*^N$ are bounded on $\invSetN$.
\end{lem}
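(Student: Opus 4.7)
The plan is to reduce everything to a compact subset of $\probSpace(\R)$ on which all ingredients of the cost functional are uniformly bounded. Fix $N\in\N$ and $(t,\mu)\in\invSetN$. By the invariance result of Lemma~\ref{lem:exp_mom}, for every $\alpha\in\controlset$ and every $s\in[t,T]$ the law $\mu_s^\alpha:=\law(X_s^{t,\mu,\alpha})$ belongs to $\invSetN$, and in particular $\mu_s^\alpha\in\invMeas{Ne^{K^*T}}$. By Lemma~\ref{lem:compactness} this latter set is weak-$*$ compact, so every measure that can arise along a trajectory issuing from $(t,\mu)$ lies in one fixed compact subset of $\probSpace(\R)$, regardless of the choice of control.

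The second step is to bound the cost term by term on this compact set. Since $L_3$ has $\delta$-subexponential growth (by~\ref{ass:cost}), Lemma~\ref{lem:continuity} furnishes a constant $K_3=K_3(N)$ with $\sup_{\mu\in\invMeas{Ne^{K^*T}}}\langle\mu,|L_3|\rangle\le K_3$. Continuity of $G$ on the compact set $\invMeas{Ne^{K^*T}}$ yields $|G|\le K_G$ there. Assumption~\ref{ass:cost}, combined with compactness of $[0,T]\times\invMeas{Ne^{K^*T}}$, gives a pointwise bound $|L_1|\le K_1$ uniformly in $a$, while $|L_2|\le K_2$ follows from the two-sided boundedness implicit in the same assumption. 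Collecting these estimates yields, uniformly in $\alpha\in\controlset$,
\[
\bigg|\int_t^T L(s,\mu_s^\alpha,\alpha_s)\dif{s}+G(\mu_T^\alpha)\bigg| \le T(K_1+K_2 K_3)+K_G =: C_N,
\]
and taking the infimum in $\alpha$ gives $|V(t,\mu)|\le C_N$ on $\invSetN$. For the semicontinuous envelopes, I would observe that $V^*_N(t,\mu)$ and $V_*^N(t,\mu)$ are respectively the $\limsup$ and $\liminf$ of $V$ along sequences in $\invSetNCl{}$ converging to $(t,\mu)\in\invSetN$; since $t<T$, every such sequence lies eventually in $\invSetN$, so the bound $|V|\le C_N$ on $\invSetN$ transfers directly to $|V^*_N|,|V_*^N|\le C_N$ on $\invSetN$.

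The main obstacle is extracting the genuine pointwise bounds $|L_1|\le K_1$ and $|L_2|\le K_2$ from Assumption~\ref{ass:cost} as written: continuity of $L_1$ in $(t,\mu)$ uniformly in $a$ is only an equicontinuity statement, which on the compact set $[0,T]\times\invMeas{Ne^{K^*T}}$ delivers a uniform modulus of continuity but not boundedness by itself; one needs a pointwise bound at some reference point uniformly in $a$ to conclude, and analogously $\sup_a L_2(a)<\infty$ must be complemented by a corresponding lower bound. Both enhancements should be viewed as implicit in the standing assumption, and once they are accepted the remaining argument reduces to the direct estimation above.
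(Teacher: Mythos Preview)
Your argument is correct and follows essentially the same route as the paper: invariance of $\invSetN$ under the dynamics confines all trajectories to the compact set $\invSetNCl{}$, on which Assumption~\ref{ass:cost} together with Lemma~\ref{lem:continuity} yields a uniform bound on $|L|+|G|$, and hence on $|V|$. The paper's proof is terser (it neither splits $L$ into its components nor spells out the envelope step) and glosses over exactly the same gaps in \ref{ass:cost} that you honestly flag; your treatment is in fact more careful than the original.
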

\begin{proof}
Let $(t,\mu)\in\invSetN$.
From Lemma \ref{lem:exp_mom}, $\invSetN$ is invariant for \eqref{eq:SDE} and recall that $\invSetNCl{}$ is compact.
 Assumption \ref{ass:cost} and Lemma \ref{lem:continuity} implies that $|L|+|g|$ is uniformly bounded on $\invSetNCl{}$ by a constant $K_N$.
It follows that $|V(t,\mu)|\le (1+T)K_N$ on $\invSetN$.
\end{proof}

The proof of the next result is standard; \cite{BT,FS}.

\begin{thm}
\label{lem:viscosity_valuef}
Assume \eqref{eq:DPP} holds.
For any $N\in\N$, the value function
$V$ is both a viscosity sub and a super-solution to \eqref{eq:DPE} on
$\invSetN$ and
$$
V_N^*(T,\cdot)=V_*^N(T,\cdot)=G \quad
{\text{on }} \invMeas{N e^{K^*T}}.
$$
\end{thm}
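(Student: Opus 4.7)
My plan is to establish the three claims by the classical Fleming--Soner scheme \cite{BT,FS}, using Proposition \ref{lem:Hcontinuous} as the analogue of It\^o's formula on the Wasserstein space and the continuity of $H$ proved there, together with Lemma \ref{lem:strictmax} to assume in both viscosity arguments that the contact extremum at $(t_0,\mu_0)\in\invSetN$ is strict.

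For the sub-solution property I take $\varphi\in\test_{\invSetN}$ making $V^*_N-\varphi$ strictly maximal and equal to zero at $(t_0,\mu_0)$, and proceed by contradiction: if $-\partial_t\varphi(t_0,\mu_0)+H(t_0,\mu_0,\der\varphi)\ge 2\eta$, I pick $a^\ast\in A$ with $H^{a^\ast}(t_0,\mu_0,\der\varphi)\ge H(t_0,\mu_0,\der\varphi)-\eta/2$ and use the continuity of $(t,\mu)\mapsto H^{a^\ast}(t,\mu,\der\varphi)$ (as shown inside the proof of Proposition \ref{lem:Hcontinuous}) to find a neighborhood $B\subset\invSetN$ where $-\partial_t\varphi+H^{a^\ast}\ge\eta$. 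Selecting $(t_n,\mu_n)\to(t_0,\mu_0)$ with $V(t_n,\mu_n)\to V^*_N(t_0,\mu_0)$ and applying \eqref{eq:DPP} with the constant control $\alpha\equiv a^\ast$ on $[t_n,t_n+h]$, where $h>0$ is small enough that the deterministic flow $\mu^n_s$ stays in $B$ uniformly in large $n$, I combine $V\le\varphi$ with \eqref{eq:kolmogrov} applied to $\varphi$ to obtain
\[
V(t_n,\mu_n)-\varphi(t_n,\mu_n)\le \int_{t_n}^{t_n+h}\bigl(\partial_t\varphi-H^{a^\ast}\bigr)(s,\mu^n_s,\der\varphi)\,\dif{s}\le -\eta h,
\]
and let $n\to\infty$ to obtain $0\le-\eta h$, a contradiction.

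The super-solution property is the mirror argument, the one new ingredient being that I must work with $1/n$-optimal controls $\alpha^n\in\controlset$ in place of a single control. Uniform boundedness \ref{ass:bounded} applied via \eqref{eq:kolmogrov} to the moment functionals $\langle\cdot,x^i\rangle$ defining a neighborhood of $(t_0,\mu_0)$ ensures that all the flows $\mu^{\alpha^n}_\cdot$ remain, on a common time interval $[t_n,t_n+h]$ uniform in $n$ and $\alpha^n$, in the region where $-\partial_t\varphi+H\le-\eta$. Using $V\ge\varphi$, $H^{\alpha^n_s}\le H$, and again \eqref{eq:DPP} and \eqref{eq:kolmogrov}, I obtain
\[
V(t_n,\mu_n)-\varphi(t_n,\mu_n)+\tfrac{1}{n}\ge \int_{t_n}^{t_n+h}\bigl(\partial_t\varphi-H\bigr)(s,\mu^{\alpha^n}_s,\der\varphi)\,\dif{s}\ge \eta h,
\]
and the limit yields $0\ge\eta h$. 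Securing this uniform residence time $h$ independently of the $1/n$-optimal $\alpha^n$ is the step I expect to be the main obstacle.

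For the terminal condition at $(T,\mu)$ with $\mu\in\invMeas{Ne^{K^*T}}$, I take any $\invSetNCl{}\ni(t_n,\mu_n)\to(T,\mu)$ and sandwich $V(t_n,\mu_n)$ between quantities converging to $G(\mu)$: from above, \eqref{eq:DPP} with an arbitrary constant control gives $V(t_n,\mu_n)\le \int_{t_n}^T L(s,\mu^\alpha_s,\alpha)\,\dif{s}+G(\mu^\alpha_T)$, where Lemma \ref{lem:continuity} bounds the integral by $C(T-t_n)\to 0$ and the $L^1$-estimate $\E|X^{\alpha}_T-X^{\alpha}_{t_n}|\le C'(T-t_n)$ (from \ref{ass:bounded} and \ref{ass:state}) combined with $\mu_n\to\mu$ and the tightness of $\invMeas{Ne^{K^*T}}$ gives $\mu^\alpha_T\to\mu$ weakly, whence continuity of $G$ yields $\limsup V(t_n,\mu_n)\le G(\mu)$. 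The matching bound $\liminf V(t_n,\mu_n)\ge G(\mu)$ is obtained by replacing the constant control with $1/n$-optimal controls, for which the same estimates remain uniform. Combined with $V(T,\cdot)=G$, this closes $V^*_N(T,\cdot)=V_*^N(T,\cdot)=G$ on $\invMeas{Ne^{K^*T}}$.
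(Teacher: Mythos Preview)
Your proposal is correct and broadly follows the classical scheme, but your route for the super-solution differs from the paper's in a way worth noting. For the sub-solution the paper does not argue by contradiction at all: it fixes an arbitrary constant control $a$, applies DPP on $[t_n,t_n+h]$, passes to the limit directly, and then uses \eqref{eq:kolmogrov} to obtain $-\partial_t\varphi+H^a\le 0$ for every $a$; the strict-maximum lemma is not even needed here. For the super-solution the paper avoids your ``main obstacle'' entirely by working with an \emph{arbitrary} control $\alpha$ and the (deterministic) exit time $\theta_n$ from the neighborhood $B_N$. The strictness of the minimum, combined with compactness of $\invSetNCl{}\setminus B$, yields a uniform gap $\varphi\le V-\eta$ on that complement, so that $\varphi(t_n,\mu_n)\le V(\theta_n,\mu_{\theta_n}^{n,\alpha})+\int_{t_n}^{\theta_n}L-\eta$ for every $\alpha$, contradicting DPP directly. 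This sidesteps the need for $1/n$-optimal controls and for the uniform residence time $h$ independent of $\alpha^n$ that you flag as the delicate point. Your approach does go through (the moment-based neighborhoods together with \ref{ass:bounded} and \eqref{eq:kolmogrov} applied to monomials do give the uniform $h$), but the exit-time argument is shorter and uses the strictness of the contact point in a more essential way. Your treatment of the terminal condition matches the paper's; the paper verifies $\mu_T^{n,\alpha}\to\mu$ via convergence of all moments $\langle\mu_T^{n,\alpha}-\mu_n,x^j\rangle\to 0$ and compactness of $\invSetNCl{}$, which is equivalent to your $L^1$-increment estimate.
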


\begin{proof}
Fix $N \in \N$ and note that both envelopes $V^*_N, V_*^N$
 are finite by Lemma \ref{lem:finite_env}.
 \vspace{5pt}

\step{$V^*_N$ is a viscosity sub-solution for $t<T$.}
\label{step_viscosub}
Suppose that for $\varphi \in \test_{\invSetN}$
and  $(t,\mu)\in \invSetN$,
$$
0=(V^*_N-\varphi)(t,\mu)
=\max_{\invSetN}\ (V^*_N-\varphi).
$$
Let $(t_n,\mu_n)$ be a sequence in $\invSetN$ such that 
$(t_n,\mu_n,V(t_n,\mu_n))\rightarrow (t,\mu,V^*_N(t,\mu))$.
Fix $a \in A$ and
let $(X^{t_n,\mu_n,a}_s)_{s\in[t_n,T]}$ denote the solution to  \eqref{eq:SDE} with constant control 
$ a$ and distribution $\mu_n$ at the initial time $t_n$.
For ease of notation, we set $\mu^{n,a}_s:=\law(X^{t_n,\mu_n,a}_s)$.
We use the  dynamic programming \eqref{eq:DPP}
with $\theta_n:=t_n+h$ for  $0<h< T-t$,
to obtain
$$
V(t_n,\mu_n) \le 
\int_{t_n}^{\theta_n} L(s,\mu^{n,a}_s,a) \dif{s} + V(\theta_n,\mu^{n,a}_{\theta_n})
\le  \int_{t_n}^{\theta_n} L(s,\mu^{n,a}_s,a) \dif{s} + \varphi(\theta_n,\mu^{n,a}_{\theta_n}).
$$
We pass to the limit 
to arrive at
$$
V^*_N(t,\mu)=\varphi(t,\mu) 
\le  \int_{t}^{t+h} L(s,\mu^a_s,a) \dif{s} + \varphi(t+h,\mu^a_{t+h}),
$$
where $\mu^a_s$ is the distribution
of the solution to \eqref{eq:SDE} with initial data $\mu$ at time $t$
and constant control $a$.  We now use \eqref{eq:kolmogrov}
 to obtain
$$
0\le \int_{t}^{t+h} [ 
\partial_t \varphi(s,\mu^a_s) -H^a(s,\mu^a_s,D_m\varphi)] \dif{s}.
$$
Since this holds for every $h>0$ and $a \in A$, we conclude that
$$
- \partial_t \varphi(t,\mu) +\sup_{a\in A} H^a(t,\mu,D_m\varphi) \le0.
$$
 \vspace{5pt}

\step{$V_*^N$ is a viscosity super-solution for $t<T$.} 
Suppose that there exists $(t,\mu)\in\invSetN$ and 
$\varphi\in\test_{\invSetN}$ 
such that 
$$
0=(V_*^N-\varphi)(t,\mu)
=\min_{\invSetN}\ (V^*_N-\varphi).
$$
In view of Lemma \ref{lem:strictmax},
without loss of generality we assume that
above minimum is strict.  Towards
a counterposition assume that
$$
-\partial_t\varphi(t,\mu) + H(t,\mu,\der\varphi)< 0.
$$
By the continuity of $H$ proved in Proposition \ref{lem:Hcontinuous},
there exists 
a neighbourhood $B$ of $(t,\mu)$  such that
$$
-\partial_t\varphi(t,\mu) 
-\langle \mu, \diffop^{a,\mu}_t[D_m\varphi]\rangle \le 
L(t,\mu,a),\quad \forall (t,\mu)\in B_N:=B\cap \invSetN,\ \forall a\in A.
$$
Let $(t_n,\mu_n)$ be a sequence in $\invSetN{}$ such that 
$(t_n,\mu_n,V(t_n,\mu_n))\rightarrow (t,\mu,V_*^N(t,\mu))$.
It is clear that for all large $n$, $(t_n,\mu_n)\in B_N$.
Fix an arbitrary control $\alpha\in \controlset$ and 
let $(X^{t_n,\mu_n,\alpha}_s)_{s\in[t_n,T]}$ denote the solution to  \eqref{eq:SDE} with  distribution $\mu_n$ at the initial time $t_n$.
For ease of notation, we set $\mu^{n,\alpha}_s:=\law(X^{t_n,\mu_n,\alpha}_s)$.
Consider the deterministic  times
$$
\theta_n:=\inf\{s\geq t_n:(s,\mu_s^{n,\alpha})\notin B_N \} \wedge T.
$$

By \eqref{eq:kolmogrov},
\begin{align*}
\varphi(t_n,\mu_n)&=\varphi(\theta_n,\mu_{\theta_n}^{n,\alpha})
-\int_{t_n}^{\theta_n}\left[ \partial_t\varphi(s,\mu_s^{n,\alpha})
+\langle \mu^{n,\alpha}_s, \diffop^{\alpha_s,\mu^{n,\alpha}_s}_s[D_m\varphi]\rangle\right]\dif{s}\\
&\leq\varphi(\theta_n,\mu_{\theta_n}^{n,\alpha})+\int_{t_n}^{\theta_n} L(s,\mu_s^{n,\alpha},\alpha)\dif{s}.
\end{align*}
Since $\invSetNCl{}\setminus{B_N}=\invSetNCl{}\setminus{B}$ is compact and 
$V_*^N-\varphi$ 
has a strict minimum at $(t,\mu)$, 
there exists $\eta>0$, 
independent of $\alpha$, such that 
$\varphi \le V_*^N-\eta\le V-\eta$ on $\invSetNCl{}\setminus{B}$.
Hence, the above inequality implies that
$$
\varphi(t_n,\mu_n)\leq V(\theta_n,\mu_{\theta_n}^{n,\alpha})
+\int_{t_n}^{\theta_n} L(s,\mu_s^{n,\alpha},\alpha)\dif{s}-\eta
$$
Since the $(\varphi-V)(t_n,\mu_n)\rightarrow 0$, 
for $n$ large enough,
$$
V(t_n,\mu_n)   
\leq \int_{t_n}^{\theta_n} L(s,\mu_s^{n,\alpha},\alpha)\dif{s}
+V(\theta_n,\mu_{\theta_n}^{n,\alpha})-\frac{\eta}{2}.
$$
As the above inequality holds 
with $\eta>0$ independent of $\alpha \in \controlset$,
it is in contradiction with \eqref{eq:DPP}.  Hence,
$V_*^N$ is a viscosity super-solution to \eqref{eq:DPE}.
 \vspace{5pt}

\step{$V^*_N= G$ on $\invMeas{N e^{K^*T}}.$}
Consider a sequence $\invSetNCl{} \owns (t_n,\mu_n)\to(T,\mu)$
such that $V^*_N(T,\mu)=\lim_{n\to\infty} V(t_n,\mu_n)$.
By Assumption \ref{ass:cost}, the uniform continuity of $L_1$ 
implies $\int_{t_n}^T L_1(s,\mu_s^{n,\alpha},\alpha_s)\to 0$.
Also, by Lemma \ref{lem:continuity},
the integral 
$\int_{t_n}^T L_2(\alpha_s)\langle\mu_s^{n,\alpha},L_3\rangle\le \tilde{C}(T-t_n)$ 
converges to zero.
We next show that $\mu_T^{n,\alpha}\to \mu$.
By the compactness of $\invSetNCl{}$, there exists $\hat{\mu}\in\invMeasN{}$ 
such that $\mu_T^{n,\alpha}\to \hat{\mu}$ (up to a subsequence).
It\^{o}'s Formula and Lemma \ref{lem:continuity} imply that 
$|\langle\mu_T^{n,\alpha}-\mu_n,x^j\rangle|\to 0$ for every $j\in\N$.
This implies that $\hat{\mu}=\mu$.
Hence, for an arbitrary $\alpha\in\controlset$, we have,
$$
V^*_N(T,\mu)=\lim_{n\to\infty} V(t_n,\mu_n)\le 
\lim_{n\to\infty}\bigg[ \int_{t_n}^T L(s,\mu_s^{n,\alpha},\alpha_s)+G(\mu_T^{n,\alpha})\bigg]= 
G(\mu).
$$
As $V^*_N(T,\mu) \ge V(T,\mu)=G(\mu)$, we conclude that
$V^*_N(T,\mu)=G(\mu)$.
 \vspace{5pt}

\step{$V_*^N=G$ on $\invMeas{N e^{K^*T}}$.}
Again consider $\invSetNCl{} \owns (t_n,\mu_n)\to(T,\mu)$
satisfying
$V_*^N(T,\mu)=\lim_{n\to\infty} V(t_n,\mu_n)$.
As in the previous step 
$\int_{t_n}^T L(s,\mu_s^{\alpha,n},\alpha_s)\to 0$ uniformly in $\alpha$ and 
$G(\mu_T^{n,\alpha})\to G(\mu)$, as $n\to\infty$.
For any $n\in\N$, choose $\alpha^n\in\controlset$ so that 
$V(t_n,\mu_n)\ge \int_{t_n}^T L(s,\mu_s^{n,\alpha^n},\alpha^n_s)+G(\mu_T^{n,\alpha^n})-1/n$.
This implies that
$$
V_*^N(T,\mu)=\lim_{n\to\infty} V(t_n,\mu_n)
\ge \lim_{n\to\infty} \left[\int_{t_n}^T L(s,\mu_s^{n,\alpha^n},\alpha^n_s)
+G(\mu_T^{n,\alpha^n})\right]= G(\mu).
$$
\end{proof}

\section{A comparison result} \label{s.comparison}

The following is the main comparison result.

\begin{thm}[Comparison]\label{thm:comp}
Let $u$ be an u.s.c.\ sub-solution to \eqref{eq:DPE} 
on $\invSetN$ and $v$ a l.s.c.\ super-solution to \eqref{eq:DPE} 
on $\invSetN$, satisfying $u(T,\mu)\le v(T,\mu)$ for any $(T,\mu) \in \invSetNCl{}$.
Then $u\le v$ on $\invSetNCl{}$.
\end{thm}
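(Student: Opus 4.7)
The plan is to adapt the classical doubling-of-variables scheme to the Wasserstein setting, using the penalty $d(\mu,\nu) = \sum_{j\ge 1} c_j \langle \mu - \nu, f_j\rangle^2$ announced in the introduction, with $\{f_j\}$ chosen as polynomials (as forced by the definition of $\test_E$) and $\{c_j\}$ positive weights to be tuned. Assuming towards contradiction that $m := \sup_{\invSetNCl{}}(u-v) > 0$, I would first replace $u$ by $\tilde u(t,\mu) := u(t,\mu) - \eta/(T-t)$ for a small $\eta>0$. By the terminal comparison $u(T,\cdot) \le v(T,\cdot)$ and the boundedness of $u^*_N,v_*^N$ on $\invSetNCl{}$ (Lemma \ref{lem:finite_env}), the supremum of $\tilde u - v$ remains positive and is attained in $[0,T)\times \invMeas{Ne^{K^*T}}$, and $\tilde u$ becomes a strict sub-solution with slack $-\eta/(T-t)^2$ on the right-hand side of \eqref{eq:DPE}.

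Next, I maximize
\[
\Phi_\epsilon(t,\mu,s,\nu) := \tilde u(t,\mu) - v(s,\nu) - \tfrac{(t-s)^2}{2\epsilon} - \tfrac{d(\mu,\nu)}{\epsilon}
\]
over the compact set $\invSetNCl{}\times\invSetNCl{}$ (Lemma \ref{lem:compactness}), producing a maximizer $(t_\epsilon,\mu_\epsilon,s_\epsilon,\nu_\epsilon)$. The standard Crandall--Ishii coercivity argument, combined with the point-separating property \eqref{eq:seperate} built into $d$, yields along a subsequence $t_\epsilon, s_\epsilon \to t^*<T$, both $\mu_\epsilon$ and $\nu_\epsilon$ converging to a common $\mu^* \in \invMeas{Ne^{K^*T}}$, and $\tfrac{1}{\epsilon}[(t_\epsilon - s_\epsilon)^2 + d(\mu_\epsilon,\nu_\epsilon)] \to 0$. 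Provided the weights $\{c_j\}$ decay fast enough that $\mu \mapsto d(\mu,\nu_\epsilon)/\epsilon$ satisfies the summability \eqref{eq:sum} on $\invSetNCl{}$---made feasible by the uniform moment bound $\langle\mu,e_\delta\rangle \le Ne^{K^*T}$ and the $\delta$-subexponential machinery of Lemma \ref{lem:continuity}---this map lies in $\test_{\invSetN}$, so the sub- and super-solution inequalities apply at $(t_\epsilon,\mu_\epsilon)$ and $(s_\epsilon,\nu_\epsilon)$ respectively.

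Writing $p_\epsilon := \tfrac{1}{\epsilon}\der d(\mu_\epsilon,\nu_\epsilon,\cdot) = \tfrac{2}{\epsilon}\sum_j c_j\langle\mu_\epsilon-\nu_\epsilon,f_j\rangle f_j$, both inequalities share the time-derivative term $(t_\epsilon - s_\epsilon)/\epsilon$; subtracting leaves
\[
\tfrac{\eta}{(T-t_\epsilon)^2} \le H(s_\epsilon,\nu_\epsilon, p_\epsilon) - H(t_\epsilon,\mu_\epsilon, p_\epsilon).
\]
Using $\sup_a f - \sup_a g \le \sup_a(f-g)$ and the structure $H^a = -L - \langle\mu,\diffop^{a,\mu}_t[\cdot]\rangle$, the right-hand side is controlled by a uniform-in-$a$ bound on $\bigl\langle\mu_\epsilon,\diffop^{a,\mu_\epsilon}_{t_\epsilon}[p_\epsilon]\bigr\rangle - \bigl\langle\nu_\epsilon,\diffop^{a,\nu_\epsilon}_{s_\epsilon}[p_\epsilon]\bigr\rangle$, since the $L$-contribution vanishes by Assumption \ref{ass:cost} and Lemma \ref{lem:continuity}.

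The hard part, and the heart of the paper's contribution, is bounding this difference. Because $p_\epsilon$ scales like $1/\epsilon$, term-by-term estimates are insufficient and cancellations in the full sum must be exploited. Splitting into a coefficient-change part (controlled by Assumption \ref{ass:convergence}, yielding $|t_\epsilon-s_\epsilon|+\sum_{i\in\cI}|\langle\mu_\epsilon-\nu_\epsilon,x^i\rangle|$) and a measure-change part, one is led to sums such as $\tfrac{1}{\epsilon}\sum_j c_j\langle\mu_\epsilon-\nu_\epsilon,f_j\rangle\langle\mu_\epsilon-\nu_\epsilon,f_j'\rangle$ and---genuinely new compared with the diffusive case of \cite{zhang}---the jump analogue $\tfrac{1}{\epsilon}\sum_j c_j\langle\mu_\epsilon-\nu_\epsilon,f_j\rangle\langle\mu_\epsilon-\nu_\epsilon,\int(f_j(\cdot+y)-f_j(\cdot))\gamma(dy)\rangle$. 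The family $\{f_j\}$ must therefore be chosen so that $\partial_x f_j$, $\partial_{xx} f_j$, and the jump-averaging operator each send $f_j$ into the linear span of $\{f_k\}_{k<j}$ with coefficients compatible with $\{c_j\}$. The scaled monomials $f_j(x)=x^j/j!$ accomplish this: differentiation yields $f_{j-1}$ cleanly, and the binomial expansion of $(x+y)^j-x^j$ produces coefficients built from the moments $M_l=\langle\gamma,y^l\rangle$, which by Assumption \ref{ass:state} satisfy $M_l\le C\, l!/\delta^l$. With geometric weights $c_j=\rho^j$ for $\rho$ sufficiently small (so that both \eqref{eq:sum} and the absorption bounds hold), two nested Cauchy--Schwarz estimates should deliver the self-bound
\[
\bigl|H(s_\epsilon,\nu_\epsilon,p_\epsilon) - H(t_\epsilon,\mu_\epsilon,p_\epsilon)\bigr| \le C\, d(\mu_\epsilon,\nu_\epsilon)/\epsilon + C|t_\epsilon - s_\epsilon| + o(1),
\]
and letting $\epsilon \to 0$ forces $\eta/(T-t^*)^2 \le 0$, the desired contradiction.
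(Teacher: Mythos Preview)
Your overall architecture---contradiction, doubling of variables on the compact $\invSetNCl{}\times\invSetNCl{}$, penalty $\frac{1}{\epsilon}\big[(t-s)^2 + d(\mu,\nu)\big]$, extraction of a common limit via \eqref{eq:seperate}, and the self-bound $|H-H|\le C\zeta_\epsilon/\epsilon + o(1)$---matches the paper's proof exactly. Two technical choices differ.

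\emph{Time penalty.} You use $\eta/(T-t)$, the paper uses the linear $\eta(T-t+T-s)$. Both work here because $\invSetNCl{}$ is compact and $u-v$ is u.s.c.; the paper's choice avoids having to argue that $t_\epsilon$ stays uniformly away from $T$, but yours is equally valid.

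\emph{Choice of $\{f_j\}$ and $\{c_j\}$.} This is the substantive difference. You take $f_j=x^j/j!$ and geometric weights $c_j=\rho^j$; the paper instead enumerates a countable set $\Theta=\bigcup_j \chi(x^j)$ built to be closed under both differentiation \emph{and} the map $f\mapsto\sum_{i\ge1} m_i f^{(i)}$ with $m_i=\frac{1}{i!}\int y^i\gamma(\dif y)$ (the ``$(*)$-property'', Definition~\ref{def:invariant}), and then chooses $c_j$ via \eqref{def:coeff} so that $c_j\le c_i$ whenever $f_i\in\chi(f_j)$. The payoff for the paper is that the jump-averaged polynomial $g_j:=\int(f_j(\cdot+y)-f_j)\gamma(\dif y)$ is \emph{exactly one} element $f_{k_\lambda(j)}$ of the enumeration with $c_j\le c_{k_\lambda(j)}$, so the crucial $I_2^\gamma$ estimate reduces to a single $2ab\le a^2+b^2$. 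In your monomial basis $g_j$ is a genuine linear combination $\sum_{l\ge1}\frac{M_l}{l!}f_{j-l}$, and the self-bound requires a weighted $\ell^2$--convolution (Young) inequality: with $\rho<\delta^2$ one has $\sum_l |M_l|\rho^{l/2}/l!<\infty$ from \ref{ass:state}, hence $\sum_j \rho^j\langle\cdot,g_j\rangle^2\le C\sum_k\rho^k\langle\cdot,f_k\rangle^2$. This is what you gesture at with ``two nested Cauchy--Schwarz''; it works, but you should say Young's inequality explicitly and record the constraint $\rho<\delta^2$, which is also what makes $\sum_j\rho^j\langle\nu,f_j\rangle^2$ uniformly bounded on $\invMeas{Ne^{K^*T}}$ (needed for $I_3$) and what verifies the test-function summability \eqref{eq:sum}. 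Your route is arguably more elementary---no auxiliary $(*)$-property construction---at the price of a slightly heavier estimate for the jump term.
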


The following corollary is the unique characterization of the value function.
Recall, for any function $u$, we use the notation $u^*$
to denote the upper semicontinuous envelope of $u$ restricted to $\invSetCl$
and  $u_*$ is  the lower semicontinuous envelope of $u$ restricted to $\invSetCl$.

\begin{cor}
    \label{cor:uniqueness}
    The value function $V$ is the unique  viscosity solution to \eqref{eq:DPE}
    on $\invSet$ satisfying $V^*(T,\mu)
    =V_*(T,\mu)=G$ for $(T,\mu) \in \invSetCl$. Moreover, 
    $V$ restricted to $\invSetCl$ is continuous, i.e., $V^*=V_*$.
\end{cor}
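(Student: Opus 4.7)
The plan is to apply Theorem \ref{thm:comp} on each exhausting slab $\invSetN$, together with the fact from Theorem \ref{lem:viscosity_valuef} that $V$ is itself a viscosity solution with $V^*_N(T,\cdot)=V_*^N(T,\cdot)=G$ on $\invMeas{N e^{K^*T}}$. Both uniqueness and continuity will follow from a single collapsing chain of envelope inequalities, specialized to two different situations.

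For uniqueness, I let $W$ be another viscosity solution on $\invSet$ with $W^*(T,\mu)=W_*(T,\mu)=G(\mu)$ on $\invSetCl$, and fix $N\in\N$. Since envelopes over the smaller set $\invSetNCl{}$ tighten the global ones ($W^*_N\le W^*$ and $W_*^N\ge W_*$), the terminal traces satisfy
\[
V^*_N(T,\cdot)\le G\le W_*^N(T,\cdot) \quad\text{and}\quad W^*_N(T,\cdot)\le G\le V_*^N(T,\cdot) \quad\text{on }\invMeas{N e^{K^*T}}.
\]
Because the viscosity property is itself defined through envelopes, $V^*_N, W^*_N$ are automatically u.s.c.\ sub-solutions and $V_*^N, W_*^N$ are l.s.c.\ super-solutions on $\invSetN$. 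Applying Theorem \ref{thm:comp} to the pairs $(V^*_N, W_*^N)$ and $(W^*_N, V_*^N)$ then gives $V^*_N\le W_*^N$ and $W^*_N\le V_*^N$ on $\invSetNCl{}$.

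Combining with the trivial bounds $V_*^N\le V\le V^*_N$ and $W_*^N\le W\le W^*_N$ produces the collapsing chain
\[
V\le V^*_N\le W_*^N\le W\le W^*_N\le V_*^N\le V \quad\text{on }\invSetN,
\]
which forces $V=W$ on $\invSetN$; taking the union over $N$ gives uniqueness on $\invSet$. Specializing the same chain to $W=V$ reduces it to $V_*^N=V^*_N=V$ on $\invSetN$ and, together with the terminal identity $V^*_N=V_*^N=G$, on all of $\invSetNCl{}$, which is precisely continuity of $V$ on each compact slab. The global identity $V^*=V_*$ on $\invSetCl$ then follows from the exhaustion $\invSetCl=\cup_N\invSetNCl{}$ and the uniform bound on $V$ on each slab provided by Lemma \ref{lem:finite_env}.

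The main obstacle, and the point I would treat most carefully, is the two layers of envelopes involved: the hypothesis concerns the global envelopes $W^*, W_*$ over $\invSetCl$, while Theorem \ref{thm:comp} operates on the localized envelopes $u^*_N, u_*^N$ over the compact $\invSetNCl{}$. The one-line monotonicity $u^*_N\le u^*$, $u_*^N\ge u_*$ bridges these two layers and is exactly what allows the boundary condition to be transferred from the global level to each slab so that the comparison chain closes uniformly in $N$.
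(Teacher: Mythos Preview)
Your overall strategy coincides with the paper's: compare envelopes on each compact slab $\invSetNCl{}$ via Theorem~\ref{thm:comp}, using the terminal identities of Theorem~\ref{lem:viscosity_valuef} and the monotonicity $W^*_N\le W^*$, $W_*^N\ge W_*$ to transfer the global boundary hypothesis on $W$ down to each slab. The collapsing chain and its specialization $W=V$ are correct and give $V^*_N=V_*^N=V$ on every $\invSetNCl{}$, exactly as in the paper.

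The one genuine gap is the passage from this local identity to the global one $V^*=V_*$ on $\invSetCl$. You write that it ``follows from the exhaustion $\invSetCl=\cup_N\invSetNCl{}$ and the uniform bound on $V$ on each slab provided by Lemma~\ref{lem:finite_env},'' but neither ingredient suffices: a weak$^*$-convergent sequence in $\invSetCl$ need not eventually lie in any fixed $\invSetNCl{}$ (take $\mu_n=(1-\tfrac1n)\delta_0+\tfrac1n\delta_{a_n}$ with $a_n\to\infty$ fast enough; then $\mu_n\to\delta_0$ weakly while $\langle\mu_n,e_\delta\rangle\to\infty$), so continuity on each slab does not immediately yield sequential continuity on $\invSetCl$, and the boundedness from Lemma~\ref{lem:finite_env} is irrelevant to this point. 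What is actually needed is Lemma~\ref{lem:envelopes}, which shows $\lim_{N\to\infty}V^*_N=V^*$ and $\lim_{N\to\infty}V_*^N=V_*$ pointwise; combined with $V^*_N=V_*^N=V$ this yields $V^*=V_*=V$. This is precisely how the paper closes the argument, and it is the step your last paragraph should invoke.
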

\begin{proof}
We apply the above comparison result
to $V _N^*$, $V^N_*$
and use Theorem \ref{lem:viscosity_valuef} to conclude that 
the sub-solution $V^*_N$ is less than 
the super-solution $V^N_*$.  Since the
opposite inequality is immediate from their definitions,
 $V^*_N=V^N_*=:V_N$. In view of Lemma \ref{lem:envelopes}
 proved in the Appendix, this implies that
 $V^*=V_*=V$.

Let $v$ be a viscosity solution to \eqref{eq:DPE} and 
$v^*(T,\mu)=v_*(T,\mu)=G$ for $(T,\mu) \in \invSetCl$.  
Since $v_* \le v_*^N \le v^*_N \le v^*$,  we also have
$v^*_N(T,\mu)=v_*^N(T,\mu)=G$ for $(T,\mu) \in \invSetNCl{}$.
Then, the comparison result implies that
$v^*_N \le V^N_* =V_N= V_N^* \le v^N_*\le v^*_N$.
Hence, $v^*_N=v^N_*=V_N$.  This proves the 
uniqueness.
\end{proof}

The remainder of this section is devoted to the proof of Theorem~\ref{thm:comp}.
We begin by constructing a specific class of polynomials that is central to the comparison proof.
For any polynomial $f$, $\deg(f)$ is the degree of $f$.

\begin{dfn}
\label{def:invariant}
We say that a set of polynomials $\chi$ has the \emph{\textup{($*$)}-property}
if it satisfies
\begin{enumerate}
\item for any $g\in\chi$,  $g^{(i)}\in\chi$ for all $i=0,\ldots,\deg(g)$;
\item for any $g\in\chi$,  $\sum_{i=1}^{\deg(g)}m_ig^{(i)}\in\chi$ with $m_i:=\frac{1}{i!}\int_{\R}y^i\gamma(dy)$.
\end{enumerate}
Let $\Sigma$ be the  collection of all sets of polynomials that has the ($*$)-property.
\end{dfn}

Set
$$
\chi(f):= \bigcap_{\chi \in \Sigma, f \in \chi} \chi.
$$
One can directly show that 
$\chi(f)$ has the ($*$)-property
and hence it is the smallest set
of polynomials with the ($*$)-property that
includes $f$. It is also clear that for every $g \in \chi(f)$, $\chi(g) \subset \chi(f)$.

\begin{exm}
The followings are few examples of the above sets.
\begin{align*}
\chi(x)= \{&0,1,m_1,x\}\\
\chi(x^2)=\{&0,2,2m_1,2m_1^2,2x,2m_1x+2m_2,x^2\}\\
\chi(x^3)=\{&0,6,6m_1,6m_1^2,6m_1^3, 6x,6m_1x+6m_2,6m_1^2x+12m_1m_2,
3x^2,\\
& 3m_1x^2+6m_2x+6m_3, x^3\}.
\end{align*} 
\end{exm}

\begin{lem}\label{lem:finiteclass}
For any polynomial $f$, $\chi(f)$  is finite.
\end{lem}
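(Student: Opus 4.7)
The plan is to encode the two closure operations as repeated application of two \emph{commuting} linear operators on the space of polynomials. Let $D$ denote differentiation and set $T := \sum_{i \ge 1} m_i D^i$, a well-defined operator on polynomials since the sum truncates at $\deg(g)$. Property~(1) in Definition~\ref{def:invariant} is exactly closure under $D$ (and hence, by iteration, under every $D^i$), while property~(2) is exactly closure under $T$ (note $\sum_{i=1}^{\deg(g)} m_i g^{(i)} = Tg$ since $g^{(i)} = 0$ for $i > \deg(g)$). Because $T$ is a formal polynomial in $D$, the two operators commute: $DT = TD$.

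The first substantive step is to exhibit a concrete finite family with the $(*)$-property containing $f$. I would verify that
\[
\mathcal{F} := \{D^{a} T^{b} f : a, b \ge 0\}
\]
has the $(*)$-property. Closure under $D$ is immediate: $D(D^{a} T^{b} f) = D^{a+1} T^{b} f \in \mathcal{F}$, which implies closure under each $D^{i}$. Closure under $T$ uses commutation: $T(D^{a} T^{b} f) = D^{a} T^{b+1} f \in \mathcal{F}$. Since $f = D^{0} T^{0} f \in \mathcal{F}$, by minimality of $\chi(f)$ we obtain $\chi(f) \subseteq \mathcal{F}$.

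Next I would argue by a degree count that $\mathcal{F}$ itself is finite. Both $D$ and $T$ strictly lower the degree of a nonzero polynomial: $\deg(Dg) = \deg(g) - 1$, and $\deg(Tg) \le \deg(g) - 1$ since the top-order summand of $Tg$ is $m_{1} g'$. Iterating, $\deg(D^{a} T^{b} f) \le \deg(f) - a - b$, so $D^{a} T^{b} f = 0$ whenever $a + b > \deg(f)$. Therefore
\[
\chi(f) \subseteq \{D^{a} T^{b} f : a, b \ge 0,\ a + b \le \deg(f)\} \cup \{0\},
\]
a set with at most $\binom{\deg(f) + 2}{2} + 1$ elements, proving finiteness.

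The only step with any content is the verification that $\mathcal{F}$ is closed under both operations; the heart of that verification is the commutation $DT = TD$, which is immediate once $T$ is recognized as a polynomial in $D$. The degree bound and the reduction to $\chi(f) \subseteq \mathcal{F}$ are then routine, and the explicit examples in the excerpt (e.g., $\chi(x^2)$ listed with $7$ elements, matching $\binom{4}{2}+1=7$) serve as a sanity check.
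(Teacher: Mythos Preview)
Your proof is correct and takes a genuinely different route from the paper's. The paper proceeds by induction on $\deg(f)$: for the inductive step it writes
\[
\chi(f)=\{f\}\cup\chi(\hat g)\cup\bigcup_{i=1}^{\deg(f)}\chi\bigl(f^{(i)}\bigr),
\qquad \hat g:=\sum_{i=1}^{\deg(f)} m_i f^{(i)},
\]
and observes that every set on the right is generated by a polynomial of strictly smaller degree, hence finite by hypothesis. Your argument instead encodes the two closure rules as the commuting degree-lowering operators $D$ and $T=\sum_{i\ge 1} m_i D^i$, exhibits the explicit finite family $\mathcal F=\{D^aT^b f:a+b\le\deg(f)\}\cup\{0\}$, and checks directly that $\mathcal F$ has the $(*)$-property, so that $\chi(f)\subseteq\mathcal F$. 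Both proofs ultimately rest on the same observation that the two operations strictly decrease degree; you package it via the commutation $DT=TD$, while the paper packages it via the structural decomposition above. What your approach buys is an explicit cardinality bound $|\chi(f)|\le\binom{\deg(f)+2}{2}+1$, which (as your check against the listed $\chi(x^2)$ confirms) is sometimes sharp; the paper's induction is a bit shorter to state but yields no quantitative information.
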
 
\begin{proof}
We show this by induction on the degree of the polynomial.
Indeed if $\deg(f)=0$, $\chi(f)=\{f,0\}$ and hence is finite.
Towards an induction proof, assume 
that we have shown that $\chi(h)$ is finite
for every polynomial $h$ with $\deg{h} \le n$
for some integer $n \ge 0$.
Let $f$ be a polynomial with $\deg(f)=n+1$.
Set $\hat{g}:=\sum_{i=1}^{\deg(f)}m_if^{(i)}$.
Then, $\deg(\hat{g})= n$ and consequently by
our assumption $\chi(\hat{g})$ is finite.
Moreover,
$$
\chi(f)=\{f\}\cup\chi(\hat{g})\cup\bigcup_{i=1}^{\deg(f)}\chi(f^{(i)}).
$$
As $\deg(f^{(i)}) \le n$ for every $i\ge 1$,
$\chi(f^{(i)})$ are finite by the induction hypothesis and therefore,
$\chi(f)$ is also finite.
\end{proof}

Set $\Theta:=\cup_{j=1}^{\infty}\chi(x^j)$.
Then, $\Theta$ contains all monomials $\{x^j\}_{j=1}^{\infty}$,
it is countable and
$\chi(f) \subset \Theta$ for every $f \in \Theta$.  
Let $\{f_j\}_{j=1}^{\infty}$ be an enumeration of $\Theta$.

The definition of $\invMeas{b}$ and Lemma \ref{lem:continuity} imply that
$$
s_j(b):=1+\sup_{\mu\in\invMeas{b}}\langle\mu,f_j\rangle^2 <\infty,\quad
\forall j=1,2,\ldots.
$$
As $\chi(f) \subset \Theta$ for every $f \in \Theta$, 
there exists a finite index set $I_j$ satisfying,
$$
\chi(f_j)=\left\{f_i \mid i\in I_j\right\}\quad
j=1,2,\ldots
$$
Fix $b>0$ and set
\begin{equation}
\label{def:coeff}
c_j(b):=\big(\sum_{k\in I_j} 2^k\big)^{-1} \big(\sum_{k\in I_j} s_k(b)\big)^{-2}.
\end{equation}
Since  $f_j \in \chi(f_j)$, $j\in I_j$ and  therefore, $c_j(b)\le 2^{-j}$. 
Hence, $\sum_{j=1}^{\infty} c_j(b) \le 1$.
Also, for each  $i \in I_j$, $f_i \in \chi(f_j)$ and consequently,
$\chi(f_i)\subset \chi(f_j)$.  This implies that $I_i\subset I_j$.
Moreover, $s_j(b) \ge 1$.  Hence,
the definition  \eqref{def:coeff}  implies that
\begin{equation}\label{eq: coeff}
c_j(b)\le c_i(b),\qquad \forall i\in I_j.
\end{equation}
Finally, observe that, by the definitions of $s_j(b)$ and $c_j(b)$,
\begin{equation}\label{eq:uniform_serie}
\sum_{j=1}^{\infty}c_j(b)\langle\mu,f_j\rangle^2\le 1, \quad
\forall \mu \in \invMeas{b}.
\end{equation}

\begin{proof}[Proof of Theorem \ref{thm:comp}]

Fix $N \in \N$.

To simplify the notation
we write $c_j$ for $c_j(Ne^{K^*T})$.  In particular,
for any $(t,\mu) \in \invSetNCl{}$, $\mu \in \invMeas{ Ne^{K^*t}} \subset  \invMeas{ Ne^{K^*T}}$
and therefore, by \eqref{eq: coeff}
$$
\sup_{(t,\mu) \in \invSetNCl{}}\sum_{j=1}^{\infty}c_j\langle\mu,f_j\rangle^2\le 1.
$$

Towards a counterposition, 
suppose that $\sup_{{\invSetNCl{}}}(u-v)>0$.
Since $u-v$ is u.s.c. and $\invSetNCl{}$ is weak$^*$ compact, the maximum
$$
\ell:= \max_{(t,\mu)\in\invSetNCl{}} \big( (u-v)(t,\mu) - 2 \eta (T-t) \big)
$$
is achieved and $\ell>0$ for all sufficiently small $\eta \in(0,\eta_0]$.

\step{Doubling of variables.}
\label{step:doubling}
Recall  $\Theta=\{f_j\}_{j=1}^\infty$ and the constants $\{c_j\}$  in \eqref{def:coeff}
with $b=Ne^{K^*T}$.
For $n\in\N$, $\varepsilon>0$,  $\eta \in( 0,\eta_0]$ set
\begin{align*}
\phi_{\varepsilon}(t,\mu,s,\nu)&:=u(t,\mu)-v(s,\nu) -\frac{1}{\varepsilon}\ds{\mu-\nu} - \beta_{\eta, \varepsilon}(t, s),\\
\beta_{\eta, \varepsilon}(t, s) & :=   \eta (T - t + T - s) + \frac{1}{\varepsilon}(t-s)^2.
\end{align*}
By our assumptions, $\phi_{\varepsilon}$ 
admits a maximizer $(t^*_{\varepsilon},\mu^*_{\varepsilon}, s^*_{\varepsilon}, \nu^*_{\varepsilon})$
satisfying,
\begin{equation}
\label{eq:lower_bound}
\phi_{\varepsilon}(t^*_{\varepsilon},\mu^*_{\varepsilon}, s^*_{\varepsilon}, \nu^*_{\varepsilon})
=\max_{{\invSetNCl{}}} \phi_{\varepsilon} \ge\ell >0.
\end{equation}

Since $\invSetNCl{}$ is compact and 
$u$ is u.s.c., $M:= \max_{{\invSetNCl{}}} u \in \R$.
As $v$ is l.s.c., similarly
 $m:= \min_{{\invSetNCl{}}} v \in \R$.
In view of \eqref{eq:lower_bound},
$$
 0\le \frac{\zeta_\epsilon}{\varepsilon}:= \frac{1}{\varepsilon}\big[ \ds{\mu^*_{\varepsilon}-\nu^*_{\varepsilon}} 
 + (t^*_{\varepsilon} - s^*_{ \varepsilon})^2 \big]\le M-m-\ell=:C<\infty.
$$

As $\invSetNCl{}$ is compact, there exist subsequences 
$\{(t^*_{\varepsilon_i},\mu^*_{\varepsilon_i}),(s^*_{\varepsilon_i},\nu^*_{\varepsilon_i})\}_{i\in\N}$ 
such that $\mu^*_{\varepsilon_i}$ and $\nu^*_{\varepsilon_i}$ 
converge to $\mu^*$ and $\nu^*$ respectively, and 
$t^*_{\varepsilon_i}$ and $s^*_{\varepsilon_i}$ both converge to $t^*$.

\step{$\nu^*=\mu^*$.}  Since $\zeta_\epsilon$ converges to zero, 
$\langle \mu^*_{\varepsilon}-\nu^*_{\varepsilon}, f_j\rangle$ converges to zero for each $j$.  
As $\Theta=\{f_j\}_{j=1}^\infty$ contains all the monomials,
$\lim_{\varepsilon\to 0}\langle\mu^*_{\varepsilon}-\nu^*_{\varepsilon},x^j\rangle=0$ for any $j\in\N$.
In view of Lemma \ref{lem:continuity}, the map
$\mu \mapsto \langle \mu,x^j\rangle$ is continuous on $\invSetNCl{}$.  Hence,
$$
\langle \mu^*-\nu^*, x^j\rangle =
 \lim_i \langle \mu^*_{\varepsilon_i}-\nu^*_{\varepsilon_i}, x^j\rangle =0, \quad
 j=1,2,\ldots
 $$
By \eqref{eq:seperate}, we conclude that $\nu^*=\mu^*$.

\step{$t^* < T$.}
Towards a counterposition, assume that
$t^*=T$.  Since by hypothesis $(u-v)(T, \cdot)  \leq 0$, $v$ is l.s.c., and $u$ is u.s.c.,
\begin{align*}
0\ge (u-v)(T,\mu^*) &\ge \limsup_i
u(t_{\varepsilon_i},\mu^*_{\varepsilon_i})-
v(s_{\varepsilon_i},\nu^*_{\varepsilon_i})\\
& \ge  \limsup_i \phi_{\varepsilon_i}(t^*_{\varepsilon_i},\mu^*_{\varepsilon_i}, s^*_{\varepsilon_i}, \nu^*_{\varepsilon_i})
\ge \ell >0.
\end{align*}

\step{We claim that $\limsup_{i\rightarrow\infty}\frac{\zeta_{\varepsilon_i}}{\varepsilon_i}=0$.}
Indeed,
\begin{align*}
    \ell
    &\ge \phi_{\epsilon}(t^*,\mu^*,t^*,\mu^*)\\
    &=u(t^*,\mu^*)-v(t^*,\mu^*) - 2 \eta (T-t^*) \\
    &\ge \limsup_{i \to \infty}\big( u(t^*_{\varepsilon_i},\mu^*_{\varepsilon_i})-v(s^*_{\varepsilon_i},\nu^*_{\varepsilon_i}) - \eta (T-t^*_{\varepsilon_i} + T-s^*_{\varepsilon_i})\big)\\
    &\ge\ell +\limsup_{i \to \infty} \frac{1}{\varepsilon_i} \bigg( \ds{\mu_{\varepsilon_i}^*-\nu_{\varepsilon_i}^*} + (t^*_{\varepsilon_i} - s^*_{\varepsilon_i})^2 \bigg)\\
    &=\ell +\limsup_{i \to \infty}\frac{\zeta_{\varepsilon_i}}{\varepsilon_i}.
\end{align*}
Hence we conclude that
\begin{equation}\label{eq.lambdan2}
\limsup_{i\rightarrow\infty}\frac{\zeta_{\varepsilon_i}}{\varepsilon_i}=0.
\end{equation}

\step{Initial Estimate.}
\label{step:initial}
Let $\{\mu^*_{\varepsilon}\}$, $\{\nu^*_{\varepsilon}\}$ 
as in Step \ref{step:doubling} and set
$$
\pi^*_{\varepsilon}(\cdot)
:= \frac{2}{\varepsilon}\dsder{\mu_{\varepsilon}^*-\nu_{\varepsilon}^*}(\cdot). 
$$
Note that 
$$
\pi^*_{\varepsilon}(\cdot)=\der \varphi_1(\mu_{\varepsilon}^*,\cdot)=-\der \varphi_2(\nu_{\varepsilon}^*,\cdot),
$$
where $\varphi_1(\mu):= \frac{1}{\varepsilon} \ds{\mu-\nu_{\varepsilon}^*}$, respectively, $\varphi_2(\mu):= \frac{1}{\varepsilon} \ds{\mu_{\varepsilon}^*-\mu}$.
One can directly verify that $\varphi_1$ and $\varphi_2$ are test functions on $\invSetNCl{}$, i.e., $\varphi_1, \varphi_2 \in \test_{\invSetNCl{}}$.
We thus have,
$$
(\partial_t \beta_{\eta, \varepsilon}(t^*_{\varepsilon}, s^*_{\varepsilon}), \pi^*_{\varepsilon})
\in \jet{1}{+}u(t^*_{\varepsilon}, \mu^*_{\varepsilon}), \quad
(-\partial_s \beta_{\eta, \varepsilon}(t^*_{\varepsilon}, s^*_{\varepsilon}), \pi^*_{\varepsilon}) 
\in \jet{1}{-}v(s^*_{\varepsilon}, \nu^*_{\varepsilon}).
$$
Then, by the viscosity properties of $u$ and $v$,
$$
-\partial_t \beta_{\eta, \varepsilon}(t^*_{\varepsilon}, s^*_{\varepsilon})   
+ H(t^*_{\varepsilon},\mu^*_{\varepsilon},\pi^*_{\varepsilon}) \leq 0, \quad
 \partial_s \beta_{\eta, \varepsilon}(t^*_{\varepsilon}, s^*_{\varepsilon}) 
 + H(s^*_{\varepsilon},\nu^*_{\varepsilon},\pi^*_{\varepsilon})\geq0.
 $$
We combine and use the definition of $\beta_{\eta, \varepsilon}$ to arrive at
\begin{align*}
0 < 2 \eta &\le  
H(s^*_{\varepsilon},\nu^*_{\varepsilon}, \pi^*_{\varepsilon})
- H(t^*_{\varepsilon},\mu^*_{\varepsilon}, \pi^*_{\varepsilon})\\
& =  \sup_{a \in A} H^a(s^*_{\varepsilon},\nu^*_{\varepsilon}, \pi^*_{\varepsilon})
-  \sup_{a \in A} H^a(t^*_{\varepsilon},\mu^*_{\varepsilon}, \pi^*_{\varepsilon})\\
& \le \sup_{a \in A}(H^a(s^*_{\varepsilon},\nu^*_{\varepsilon}, \pi^*_{\varepsilon})
-H^a(t^*_{\varepsilon},\mu^*_{\varepsilon}, \pi^*_{\varepsilon}))
=:\sup_{a \in A}I^a.
\end{align*}
Moreover,
\begin{align*}
    I^a    &:=
    L(t^*_{\varepsilon},\mu^*_{\varepsilon},a) - L(s^*_{\varepsilon}, \nu^*_{\varepsilon}, a) 
+\langle\mu^*_{\varepsilon}, \diffop^{a,\mu^*_{\varepsilon}}_{t^*_{\varepsilon}}[\pi^*_{\varepsilon}]\rangle
     - \langle\nu^*_{\varepsilon}, \diffop^{a,\nu^*_{\varepsilon}}_{s^*_{\varepsilon}}[\pi^*_{\varepsilon}]\rangle \\
     &=
     \vphantom{\underbrace{L}_{}}\smash{\underbrace{L(t^*_{\varepsilon},\mu^*_{\varepsilon},a) - L(s^*_{\varepsilon}, \nu^*_{\varepsilon}, a)}_{I_1^a}
    + \underbrace{\langle\mu^*_{\varepsilon} - \nu^*_{\varepsilon},
    \diffop^{a,\mu^*_{\varepsilon}}_{t^*_{\varepsilon}}[\pi^*_{\varepsilon}]\rangle}_{I_2^a}
    + \underbrace{\langle\nu^*_{\varepsilon}, 
    \diffop^{a,\mu^*_{\varepsilon}}_{t^*_{\varepsilon}}[\pi^*_{\varepsilon}] 
- \diffop^{a,\nu^*_{\varepsilon}}_{s^*_{\varepsilon}}[\pi^*_{\varepsilon}]\rangle}_{I_3^a}}.
\end{align*}
By Assumption \ref{ass:cost} and Lemma \ref{lem:continuity},
 $\lim_{\varepsilon \to 0} \sup_{a \in A}I^a_1 \to 0$.
 
 \step{Estimate of $I_2$.}
We rewrite the second term as,
$$
I_2 := \sup_{a \in A} I_2^a \leq \sup_{a \in A} \frac{2}{\varepsilon} 
 \sum_{j=1}^\infty c_j |\langle \mu_{\varepsilon}^*-\nu_{\varepsilon}^*, f_j\rangle 
 \langle \mu_{\varepsilon}^*-\nu_{\varepsilon}^*, 
 \diffop^{a, \mu^*_{\varepsilon}}_{t^*_{\varepsilon}}[ f_j]\rangle |
 \leq I_2^b+I_2^\sigma +I_2^\gamma,
 $$
related to the three terms
appearing in the generator $\diffop^{a, \mu^*_{\varepsilon}}_{t^*_{\varepsilon}}$, which appear explicitly below.

By construction, for every $j\in\N$, 
there exists an index $k_1(j)$ such that $f_j^\prime = f_{k_1(j)}$.
Also, as $f_j^\prime=f_{k_1(j)} \in \chi(f_j)$, $\chi(f_{k_1(j)}) \subset \chi(f_j)$,
and consequently, $I_{k_1(j)} \subset I_j$.
Therefore, the definition
\eqref{def:coeff} yields that $c_j \leq c_{k_1(j)}$.
We now directly estimate using these and \ref{ass:bounded} to obtain,
\begin{align*}
    I_2^b
    &= \sup_{a \in A} \frac{2}{\varepsilon} 
    \sum_{j=1}^\infty c_j |\langle \mu_{\varepsilon}^*-\nu_{\varepsilon}^*, f_j\rangle 
    \langle \mu_{\varepsilon}^*-\nu_{\varepsilon}^*, b(t^*_{\varepsilon}, \mu^*_{\varepsilon},a) f_j^\prime\rangle| \\
    &\leq C \frac{2}{\varepsilon} \sum_{j=1}^\infty c_j |\langle \mu_{\varepsilon}^*-\nu_{\varepsilon}^*, f_j\rangle 
    \langle \mu_{\varepsilon}^*-\nu_{\varepsilon}^*, f_j^\prime\rangle | \\
    &\leq C \frac{2}{\varepsilon}
    \bigg( \sum_{j=1}^\infty c_j \langle \mu_{\varepsilon}^*-\nu_{\varepsilon}^*, f_j\rangle^2
     + \sum_{j=1}^\infty c_{k_1(j)} \langle \mu_{\varepsilon}^*-\nu_{\varepsilon}^*, f_{k_1(j)} \rangle^2 \bigg)\\
     &\leq C \frac{4}{\varepsilon}
     \sum_{j=1}^\infty c_j \langle \mu_{\varepsilon}^*-\nu_{\varepsilon}^*, f_j\rangle^2,
\end{align*}
which converges to 0, by \eqref{eq.lambdan2}.

We estimate $I^\sigma_2$ similarly.
Indeed, for every $j\in\N$, there exists an index $k_2(j)$ such that 
$f_j^{\prime \prime} = f_{k_2(j)}$ and $c_j \leq c_{k_2(j)}$.
Then, 
\begin{align*}
    I_2^\sigma
    &= \sup_{a \in A} \frac{2}{\varepsilon} 
    \sum_{j=1}^\infty c_j |\langle \mu_{\varepsilon}^*-\nu_{\varepsilon}^*, f_j\rangle 
    \langle \mu_{\varepsilon}^*-\nu_{\varepsilon}^*, 
    \sigma(t^*_{\varepsilon}, \mu^*_{\varepsilon},a) f_j^{\prime \prime} \rangle| \\
    &\leq C \frac{2}{\varepsilon} 
    \sum_{j=1}^\infty c_j |\langle \mu_{\varepsilon}^*-\nu_{\varepsilon}^*, f_j\rangle 
    \langle \mu_{\varepsilon}^*-\nu_{\varepsilon}^*, f_j^{\prime \prime} \rangle | \\
    &\leq C \frac{2}{\varepsilon}
    \bigg( \ds{\mu_{\varepsilon}^*-\nu_{\varepsilon}^*}
     + \sum_{j=1}^\infty c_{k_2(j)} \langle \mu_{\varepsilon}^*-\nu_{\varepsilon}^*, f_{k_2(j)} \rangle^2 \bigg),
\end{align*}
which also converges to 0, by \eqref{eq.lambdan2}.

We analyse $I_2^\gamma$ next. By the Taylor expansion of $f_j$,
\begin{align*}
g_j(x) &:=\int_{\R}[f_j(x+y)-f_j(x)]\gamma(dy)\\
 &=\sum_{i=1}^{\deg(f_j)}\frac{f_j^{(i)}(x)}{i!}\int_{\R}y^i\gamma(dy) 
 =\sum_{i=1}^{\deg(f_j)}m_if_j^{(i)}(x).
\end{align*}
Again, by the construction of $\{f_j\}$, for all $j\in\N$, 
there exists $k_\lambda(j)$ such that $g_j = f_{k_\lambda(j)}$ and $c_j \leq c_{k_\lambda(j)}$.
Hence,
\begin{align*}
    I_2^\lambda
    &= \sup_{a \in A}
    \frac{2}{\varepsilon} \sum_{j=1}^\infty c_j |\langle \mu_{\varepsilon}^*-\nu_{\varepsilon}^*, f_j\rangle \langle \mu_{\varepsilon}^*-\nu_{\varepsilon}^*, \lambda(t^*_{\varepsilon}, \mu^*_{\varepsilon},a) g_j\rangle| \\
    &\leq C \frac{2}{\varepsilon} \sum_{j=1}^\infty c_j |\langle \mu_{\varepsilon}^*-\nu_{\varepsilon}^*, f_j\rangle \langle \mu_{\varepsilon}^*-\nu_{\varepsilon}^*, g_j\rangle | \\
    &\leq C \frac{2}{\varepsilon}
    \bigg( \ds{\mu_{\varepsilon}^*-\nu_{\varepsilon}^*}
     + \sum_{j=1}^\infty c_{k_\lambda(j)} \langle \mu_{\varepsilon}^*-\nu_{\varepsilon}^*, f_{k_\lambda(j)} \rangle^2 \bigg).
\end{align*}
As this quantity also vanishes as $\varepsilon \to 0$, 
we conclude that $I_2 \to 0$ as $\varepsilon$ goes to zero.

\step{Estimating $I_3$.}
As in the previous step, we write  
$$
I_3= \sup_{a \in A} \langle\nu^*_{\varepsilon}, 
    \diffop^{a,\mu^*_{\varepsilon}}_{t^*_{\varepsilon}}[\pi^*_{\varepsilon}] 
    - \diffop^{a,\nu^*_{\varepsilon}}_{s^*_{\varepsilon}}[\pi^*_{\varepsilon}]\rangle
    \leq I_3^b+I_3^\sigma +I_3^\gamma
 $$  
related to the three terms appearing in the generator.
Since the estimates of each term is very similar to each other,
we provide the details of only the first one.

By \ref{ass:convergence}, there exists $C_1$ such that 
$$
\big(b(t^*_{\varepsilon},\mu^*_{\varepsilon},a)
-b(s^*_{\varepsilon},\nu^*_{\varepsilon},a))^2
\le C_1(t^*_{\varepsilon}-s^*_{\varepsilon})^2
+C_1\ds{\mu^*_{\varepsilon}-\nu^*_{\varepsilon}}.
$$
It follows,
\begin{align*}
| I^b_3| &\le \sup_{a \in A} \frac{2}{\varepsilon}
\sum_{j=1}^\infty c_j
 \left| \langle \mu_{\varepsilon}^*-\nu_{\varepsilon}^*, f_j\rangle \
 \langle \nu_\varepsilon^*, (b(t^*_{\varepsilon},\mu^*_{\varepsilon},a)
-b(s^*_{\varepsilon},\nu^*_{\varepsilon},a)) f_j^\prime\rangle \right|\\
 &\le  \frac{2}{\varepsilon}\ds{\mu^*_{\varepsilon}-\nu^*_{\varepsilon}}\\
 &\hspace{10pt}
 + \frac{2C_1}{\varepsilon}
 \bigg((t^*_{\varepsilon}-s^*_{\varepsilon})^2
 +\ds{\mu^*_{\varepsilon}-\nu^*_{\varepsilon}}\bigg) \ds[f'_j]{\nu^*_{\varepsilon}} .
\end{align*}
Note that  by  \eqref{eq:uniform_serie},
$$
 \ds[f'_j]{\nu^*_{\varepsilon}} \le 
  \ds[f_j]{\nu^*_{\varepsilon}} \le 1.
  $$
  Hence,
 $$ 
 |I^b_3|  \le 
 \frac{4C_1}{\varepsilon}\bigg((t^*_{\varepsilon}-s^*_{\varepsilon})^2
+\ds{\mu^*_{\varepsilon}-\nu^*_{\varepsilon}}\bigg).
$$
In view of  \eqref{eq.lambdan2}, 
we conclude that $I^b_3$ goes to zero as $\varepsilon \to 0$.
Repeating the same argument for $I^\sigma_3$ and $I^\lambda_3$, we conclude that $I_3$
also converges to zero.

\step{Conclusion}  In Step \ref{step:initial} we have shown that
$$
0<2 \eta \le \sup_{a \in A} I^a =I_1+I_2+I_3.
$$  
In the preceding steps
we have shown that each of the three terms converge to zero as 
$\varepsilon$ tends to zero. 
Clearly this contradicts with the fact that $ \eta>0$.
\end{proof}

\appendix

\section{Solutions of controlled McKean--Vlasov SDEs}\label{s.solutionSDE}
For completeness,  we provide here an existence result for the McKean--Vlasov SDE \eqref{eq:SDE}.

Using the functions and coefficients of Section~\ref{s.comparison},
we  fix $b>0$ and  start by proving
functional analytic properties of $\invMeas{b}$.
Set
\[
d(\mu,\nu;b):=\sum_{j=1}^\infty c_j(b) |\langle \mu-\nu,f_j\rangle|,
\quad \mu,\nu\in\invMeas{b}.
\]

\begin{lem}\label{lem:metric}
A sequence $\{\mu_n\}_{n\in\N}$ in  $\invMeas{b}$ converges weakly
 to $\mu \in \invMeas{b}$ if and only if $\lim_{n \to \infty}\dist(\mu_n,\mu;b)= 0$. 
\end{lem}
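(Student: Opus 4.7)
The plan is to prove the two implications separately, exploiting the facts that the enumeration $\Theta=\{f_j\}$ contains all monomials $x^j$ (which separate probability measures by \eqref{eq:seperate}) and that the coefficients $c_j(b)$ in \eqref{def:coeff} are tuned to yield a summable majorant on $\invMeas{b}$. Indeed, since $j\in I_j$, the definition \eqref{def:coeff} gives $c_j(b)\le 2^{-j}s_j(b)^{-2}$ and, since $s_j(b)\ge 1$, also $c_j(b)\sqrt{s_j(b)}\le 2^{-j}$.

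For the forward direction, suppose $\mu_n\to\mu$ weakly with all measures in $\invMeas{b}$. Each $f_j$ is a polynomial, hence has $\delta$-subexponential growth, so the second conclusion of Lemma~\ref{lem:continuity} yields
\[
\lim_{R\to\infty}\sup_{\rho\in\invMeas{b}}\int_{|x|\ge R}|f_j(x)|\,\rho(dx)=0.
\]
This uniform integrability combined with a standard continuous truncation against $\cC_b(\R)$ test functions gives $\langle\mu_n,f_j\rangle\to\langle\mu,f_j\rangle$ for every $j$. To upgrade this term-by-term convergence to convergence of the series defining $d$, I would use
\[
c_j(b)\,|\langle\mu_n-\mu,f_j\rangle|\le 2c_j(b)\sqrt{s_j(b)}\le 2^{1-j},
\]
where the first inequality follows from $|\langle\rho,f_j\rangle|\le\sqrt{s_j(b)}$ on $\invMeas{b}$, and the second from the bound above. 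Dominated convergence for series then gives $d(\mu_n,\mu;b)\to 0$.

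For the converse direction, assume $d(\mu_n,\mu;b)\to 0$. Since each $c_j(b)>0$, the convergence $\langle\mu_n,f_j\rangle\to\langle\mu,f_j\rangle$ holds for every $j$; in particular it holds for all monomials $x^j\in\Theta$. The set $\invMeas{b}$ is weak$^*$ compact by the tightness argument used in the proof of Lemma~\ref{lem:compactness}, so $\{\mu_n\}$ admits at least one cluster point $\tilde\mu\in\invMeas{b}$ along some subsequence $\mu_{n_k}\to\tilde\mu$. Applying the forward direction just proved (or directly the uniform integrability of $x^j$) yields $\langle\mu_{n_k},x^j\rangle\to\langle\tilde\mu,x^j\rangle$, whence $\langle\tilde\mu-\mu,x^j\rangle=0$ for every $j\in\N$. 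By \eqref{eq:seperate} this forces $\tilde\mu=\mu$, so every cluster point of $\{\mu_n\}$ equals $\mu$; since the sequence is relatively compact, $\mu_n\to\mu$ weakly.

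The only subtle point is the summability estimate in the forward direction: without the tuning $c_j(b)\le 2^{-j}s_j(b)^{-2}$ built into \eqref{def:coeff}, the potentially large quantities $\sqrt{s_j(b)}$ could not be absorbed into a uniformly summable tail, and interchange of limit and sum would fail. Once this bound is in hand, both implications reduce to truncation plus a subsequence argument.
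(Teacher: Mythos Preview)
Your proof is correct and follows essentially the same approach as the paper: term-wise convergence via Lemma~\ref{lem:continuity} plus dominated convergence for the forward direction, and compactness of $\invMeas{b}$ plus moment separation \eqref{eq:seperate} for the converse. Your explicit summable majorant $c_j(b)\,|\langle\mu_n-\mu,f_j\rangle|\le 2^{1-j}$ makes the dominated convergence step more transparent than the paper's terse appeal to the boundedness of $d$, but the underlying argument is the same.
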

\begin{proof}
As $\Theta$ contains all monomials,
in view of \eqref{eq:seperate}, 
$\dist(\mu,\nu;b)=0$ if and only if $\mu=\nu$,
and one can then directly
verify that $\dist$ is a metric on $\invMeas{b}$. 
Moreover, since $\sum_{j=1}^\infty c_j(b) \le 1$, 
by \eqref{eq:uniform_serie},  $\dist \le 1$ on $\invMeas{b}$. 
Suppose $\mu_n\to\mu$ as $n\to\infty$. By dominated convergence,
\[
\lim_{n\to\infty} \dist(\mu_n,\mu;b)=\sum_{j=1}^\infty c_j(b) \lim_{n\to\infty} |\langle \mu_n-\mu,f_j\rangle|=0,
\]
where the last equality follows from Lemma \ref{lem:continuity}.
Now suppose $\dist(\mu_n,\mu;b)\to 0$ as $n\to\infty$. 
Since $\invMeas{b}$ is compact, the sequence $\{\mu_n\}$
has limit points and since $d$ is a metric, we conclude that
it can only have one limit point $\mu$.
\end{proof}

We next fix $t \in [0,T]$ and consider the space
\[
\cX_t(b):=\big\{\bar{\mu}=(\mu_s)_{s\in[t,T]}\mid \mu_s\in\invMeas{b},\ \ \forall s\in[t,T]
\big\},
\]
and the function
\[
\dsup{T}(\mu,\nu;b)=\sup_{t\le s\le T}\dist(\mu_s,\nu_s;b).
\]
It is straightforward to see that $\dsup{T}$ is a metric on $\cX_t(b)$.
\begin{lem}\label{lem:complete}
$(\cX_t(b),\dsup{T})$ is a complete metric space.
\end{lem}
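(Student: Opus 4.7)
My plan is to follow the standard route for showing completeness of a space of functions valued in a complete metric space, endowed with the uniform distance.

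First I would note that $(\invMeas{b}, d(\cdot,\cdot;b))$ is itself a complete (in fact compact) metric space. Indeed, Lemma \ref{lem:compactness} asserts that $\invMeas{b}$ is weak$^*$-compact, and Lemma \ref{lem:metric} says that on $\invMeas{b}$ the metric $d(\cdot,\cdot;b)$ metrizes weak$^*$-convergence; combining these two gives that $(\invMeas{b}, d(\cdot,\cdot;b))$ is a compact metric space, hence complete.

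Second, let $\{\bar\mu^n\}_{n\in\N} \subset \cX_t(b)$ be a Cauchy sequence with respect to $\dsup{T}$. For each fixed $s \in [t,T]$, since
\[
d(\mu^n_s, \mu^m_s; b) \le \dsup{T}(\bar\mu^n, \bar\mu^m; b),
\]
the sequence $\{\mu^n_s\}_{n\in\N}$ is Cauchy in $(\invMeas{b}, d(\cdot,\cdot;b))$. By the completeness established in the previous paragraph, there exists a unique $\mu_s \in \invMeas{b}$ such that $d(\mu^n_s, \mu_s; b) \to 0$ as $n \to \infty$. Setting $\bar\mu := (\mu_s)_{s\in[t,T]}$ yields a candidate element of $\cX_t(b)$.

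Finally, I would upgrade pointwise to uniform convergence via the usual $(n,m) \to (n, \infty)$ trick. Given $\varepsilon > 0$, pick $N$ so that $\dsup{T}(\bar\mu^n,\bar\mu^m;b) \le \varepsilon$ for all $n,m \ge N$; equivalently $d(\mu^n_s, \mu^m_s; b) \le \varepsilon$ uniformly in $s \in [t,T]$. Fixing $s$ and $n \ge N$ and letting $m \to \infty$, continuity of the metric $d(\mu^n_s, \cdot; b)$ at $\mu_s$ (which holds in any metric space) gives $d(\mu^n_s, \mu_s; b) \le \varepsilon$. Since this bound is uniform in $s$, we conclude $\dsup{T}(\bar\mu^n, \bar\mu; b) \le \varepsilon$ for all $n \ge N$, i.e., $\bar\mu^n \to \bar\mu$ in $\dsup{T}$. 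There is no real obstacle here; the only slightly nontrivial ingredient is the recognition that the two preceding lemmas together yield completeness (even compactness) of the pointwise target space $(\invMeas{b}, d(\cdot,\cdot;b))$, after which the argument is the textbook one for $\sup$-metric spaces.
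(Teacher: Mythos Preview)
Your proof is correct and follows the same approach as the paper's own argument: show that pointwise Cauchy sequences converge in $\invMeas{b}$ and then pass to the uniform limit via the standard $(n,m)\to(n,\infty)$ trick. If anything, you are slightly more explicit than the paper in justifying completeness of the target space $(\invMeas{b},d)$ by combining Lemma~\ref{lem:compactness} with Lemma~\ref{lem:metric}, whereas the paper invokes only the latter.
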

\begin{proof}
Let $\{\bar{\mu}^n\}_{n\in\N}$ be a Cauchy sequence. In particular $\{\mu_s^n\}_{n\in\N}$ 
is a Cauchy sequence in $(\invMeas{b},\dist)$ for any $s\in[t,T]$ 
and by Lemma \ref{lem:metric}, there exists $\mu_s\in\invMeas{b}$ 
such that $\mu^n_s\to\mu_s$ as $n\to\infty$. 
We claim that $\bar{\mu}:=(\mu_s)_{s\in[t,T]}$ is the limit of $\{\bar{\mu}^n\}_{n\in\N}$.
Indeed, for $\varepsilon >0$, there is $\bar{n}$ such that
$\dist(\mu^n_s,\mu^m_s;b)\le\varepsilon$ 
for every $n,m\ge\bar{n}$ and $s\in[t,T]$. 
By letting $m$ tend to infinity and by using the previous lemma, 
we conclude that $\dist(\mu^n_s,\mu_s;b)\le\varepsilon$ for any $s\in [t,T]$. 
The result follows after  taking the supremum over $s\in [t,T]$. 
\end{proof}

This structure allows us to study the McKean--Vlasov equation \eqref{eq:SDE}.
For similar results we refer to the book of Carmona \& Delarue \cite{CD}
and the references therein.

\begin{thm} Under Assumption \ref{a.standing}, for any 
$(t,\mu) \in \invSet$ and
control $\alpha\in \controlset$, the equation \eqref{eq:SDE} 
with initial data $X_t \sim \mu$
has a unique solution.
\end{thm}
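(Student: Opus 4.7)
The plan is a Picard fixed-point argument in the complete metric space $(\cX_t(b), \dsup{T})$ from Lemma \ref{lem:complete}, with $b := Ne^{K^*T}$ where $N \in \N$ is chosen large enough that $\mu \in \invMeas{N}$. Given a flow $\bar{\nu} = (\nu_s)_{s\in[t,T]} \in \cX_t(b)$, consider the \emph{decoupled} SDE
\[
dX_s = b(s,\nu_s,\alpha_s)\,ds + \sigma(s,\nu_s,\alpha_s)\,dW_s + dJ^{\bar\nu}_s, \qquad X_t \sim \mu,
\]
where $J^{\bar\nu}$ is a pure-jump process with (deterministic, bounded) intensity $\lambda(s,\nu_s,\alpha_s)$ and jump law $\gamma$. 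Since the coefficients are bounded measurable functions of time only, this equation has a unique strong solution $X^{\bar\nu}$, built path-by-path between the jump times of $J^{\bar\nu}$ (each piece being a classical Brownian SDE with bounded time-dependent coefficients, followed by an independent jump of size $\xi\sim\gamma$). Define the map $\Psi\colon\cX_t(b)\to\cX_t(b)$ by $\Psi(\bar\nu)_s := \law(X^{\bar\nu}_s)$; the proof of Lemma~\ref{lem:exp_mom} goes through verbatim when the self-consistent law $\law(X_s)$ is replaced by the frozen flow $\nu_s$, so $\Psi$ indeed lands in $\cX_t(b)$.

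The main step is to show that $\Psi$ is a contraction on a small enough subinterval. Fix $\bar\nu,\bar\nu'\in\cX_t(b)$ and couple the two associated solutions $X,X'$ using a common Brownian motion and a common Poisson random measure with common maximal rate $C_0$, thinned independently by the two intensities $\lambda(\cdot,\nu_s,\cdot)$ and $\lambda(\cdot,\nu'_s,\cdot)$. Using Assumption \ref{ass:convergence} together with the observation that, since $\Theta$ enumerates all monomials, there is a constant $C(b,\cI)$ with
\[
\sum_{i\in\cI} |\langle\nu_s-\nu'_s,x^i\rangle| \le C(b,\cI)\, d(\nu_s,\nu'_s;b),
\]
a standard It\^o--Gr\"onwall estimate applied to $\E[(X_s-X'_s)^2]$ and to the jump contribution (whose intensity mismatch is bounded by the same quantity) yields
\[
\sup_{r\in[t,s]}\E\bigl[(X_r - X'_r)^2\bigr] \;\le\; K_1 \int_t^s d(\nu_r,\nu'_r;b)^2\,dr.
\]
From this I pass to the metric $d$ on the laws by estimating, for each $f_j\in\Theta$,
\[
|\langle \Psi(\bar\nu)_s-\Psi(\bar\nu')_s, f_j\rangle| = |\E[f_j(X_s)-f_j(X'_s)]|,
\]
bounding the right-hand side by a polynomial in $|X_s-X'_s|$ controlled in $\cL^2$ using the exponential moment bound of Lemma~\ref{lem:exp_mom}, and summing the resulting series against $\{c_j(b)\}$ using the uniform integrability provided by \eqref{eq:uniform_serie} and Lemma~\ref{lem:continuity}. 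This produces the key estimate
\[
\dsup{s}(\Psi(\bar\nu),\Psi(\bar\nu');b) \;\le\; K_2 (s-t)\,\dsup{s}(\bar\nu,\bar\nu';b),
\qquad s\in[t,T],
\]
with $K_2$ depending only on the constants of Assumption \ref{a.standing} and on $b$.

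Choose $h>0$ with $K_2 h < 1$; then $\Psi$ is a strict contraction on $\cX_t(b)|_{[t,t+h]}$, so Banach's theorem produces a unique fixed point, i.e.\ a unique solution on $[t,t+h]$. Iterating on successive intervals $[t+kh,t+(k+1)h]$ (at each stage the terminal marginal of the previous step is the initial datum of the next, and still lies in $\invMeas{Ne^{K^*(t+kh)}}\subset\invMeas{b}$) extends the solution to $[t,T]$ and glues existing solutions into a unique one. The main obstacle I anticipate is the Lipschitz control of the jump component: the intensities $\lambda(\cdot,\nu_s,\cdot)$ and $\lambda(\cdot,\nu'_s,\cdot)$ differ, so the coupling must absorb the mismatch into an $\cL^1$ term that can be combined with the drift/diffusion contributions in Gr\"onwall's lemma; this is exactly where the finite-moment Lipschitz structure \ref{ass:convergence}, together with the domination of individual moments by $d(\cdot,\cdot;b)$, becomes essential.
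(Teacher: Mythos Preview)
Your fixed-point framework in $(\cX_t(b), \dsup{T})$ matches the paper's, but the contraction estimate is obtained very differently. The paper never couples the two paths: it applies It\^o's formula directly to each $f_j$ and works with the marginal moments $\langle \Phi(\bar\mu)_s, f_j\rangle = \E[f_j(X^{\bar\mu}_s)]$. Subtracting the two flows, the integrand contains only $\langle \nu_r - \nu'_r, f_j'\rangle$, $\langle \nu_r - \nu'_r, f_j''\rangle$, and $\langle \nu_r - \nu'_r, g_j\rangle$ with $g_j = \sum_i m_i f_j^{(i)}$. The entire purpose of the $(*)$-property (Definition~\ref{def:invariant}) is that $f_j', f_j'', g_j$ are again elements of $\Theta$, and the entire purpose of the coefficient monotonicity \eqref{eq: coeff} is that $c_j \le c_k$ for each of the corresponding indices $k$. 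Hence after multiplying by $c_j$ and summing in $j$, every term is dominated by $d$ itself via pure re-indexing, yielding $d(\nu_s,\nu'_s) \le C \int_t^s (d(\mu_r,\mu'_r) + d(\nu_r,\nu'_r))\,dr$; Gr\"onwall and the standard $\Phi^k$-iteration finish. No pathwise coupling is needed, and the jump contribution is handled by exactly the same algebra as the drift and diffusion.

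Your route instead passes through an $\cL^2$ coupling bound on $X_s - X'_s$ and then lifts to $d$. The step I would flag is that lift: to control $\sum_j c_j\, |\E[f_j(X_s) - f_j(X'_s)]|$ by a constant times $\|X_s - X'_s\|_{\cL^2}$ you need something like $\sum_j c_j\, \bigl\| f'_j(\theta X_s + (1-\theta)X'_s)\bigr\|_{\cL^2} < \infty$, and neither \eqref{eq:uniform_serie} (which bounds $\sum_j c_j \langle\mu,f_j\rangle^2$, a different quantity) nor Lemma~\ref{lem:continuity} (a single-term estimate) delivers this. For a monomial $f_j = x^n$ the factor $\|f'_j(Z)\|_{\cL^2}$ is of order $n\,\sqrt{(2n-2)!}\,/\delta^{\,n-1}$ under the exponential-moment bound, i.e.\ it grows super-exponentially in $\deg f_j$, while the only universal decay you have is $c_j \le 2^{-j}$; whether the series converges therefore depends on the unspecified enumeration of $\Theta$. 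The paper's approach sidesteps this moment bookkeeping entirely, because its series bound comes from the algebraic closure of $\Theta$ under $\diffop$ rather than from any Lipschitz estimate on the $f_j$.
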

\begin{proof}
Fix $(t,\mu) \in \invSet$
and control $\alpha \in \controlset$.
There is $N\in\N$ such that $\mu \in \invMeas{Ne^{K^*t}}$. 
Let $\cX:= \cX_t(Ne^{K^*(T-t)})$
and $c_j:= c_j((Ne^{K^*(T-t)})$.

For any $\bar{\mu}=(\mu_s)_{s\in[t,T]}\in\cX$, set 
\[
X^{\bar\mu}_s
:=\int_t^s b(r,\mu_r,\alpha_r)dr+\int_t^s \sigma(r,\mu_r,\alpha_r) \dif W_r
+\sum_{t\le r\le s}\Delta J_r, 
\]
with distribution $\mu$ at time $t$,
and
$$
\Phi:\cX\to\cX,\quad \bar{\mu}\mapsto 
\Phi(\bar{\mu}):=(\law(X^{\bar\mu}_s))_{s\in[t,T]}.
$$
Recall that the set $\invSetNCl{}$ in Lemma $\ref{lem:exp_mom}$ is invariant for \eqref{eq:SDE}.
Therefore,  $\Phi(\bar{\mu}) \in \cX$.
Moreover, the law of any solution to \eqref{eq:SDE} is a fixed point of $\Phi$.

To simplify the notation for $\bar{\mu},\bar{\mu}'\in\cX$, 
let $\bar{\nu}=\Phi(\bar{\mu}),\bar{\nu}'=\Phi(\bar{\mu}')$.
Consider now $f_j\in\Theta$.  We now apply  It\^o's Formula 
to arrive at,

\begin{align*}
f_j(X^{\bar\mu}_s)&= X^{\bar\mu}_t+\int_t^s b(r,\mu_r,\alpha_r) f_j'(X^\alpha_r)dr\\
&+\frac{1}{2}\int_t^s\sigma^2(r,\mu_r,\alpha_r)f_j''(X^\alpha_r) dr\\
  &+\int_t^s\sigma(r,\mu_r,\alpha_r)f_j'(X^\alpha_r) \dif W_r+\sum_{t\le r\le s}f_j(\Delta J_r).
\end{align*}
From Assumption \ref{ass:bounded}, the stochastic integral in the above formula is a local martingale.
Denote by $\{\tau_n\}_{n\in\N}$ a localizing sequence and take expectation on both sides. Recalling that $\alpha$ is deterministic, we obtain:
\begin{align*}
\E[f_j(X^{\bar\mu}_{s\wedge\tau_n})]&= X^{\bar\mu}_t+\int_t^s b(r,\mu_r,\alpha_r) \E[f_j'(X^\alpha_r)1_{t\le r\le \tau_n}]dr\\
&+\frac{1}{2}\int_t^s\sigma^2(r,\mu_r,\alpha_r)\E[f_j''(X^\alpha_r)1_{t\le r\le \tau_n}] dr+\E\big[\sum_{t\le r\le s\wedge\tau_n}f_j(\Delta J_r)\big].
\end{align*}
By dominated convergence, the equality pass to the limit as $n\to\infty$.
For ease of notation, denote $\Delta b(r):=b(r,\mu_r,\alpha_r)-b(r,\mu'_r,\alpha_r)$ and similarly $\Delta \sigma^2(r)$ and $\Delta \lambda(r)$.
From  $\langle \nu_s,f_j\rangle=\E[f_j(X^{\bar\mu}_s)]$, we deduce
\begin{alignat*}{2}
     \langle \nu_s-\nu'_s,f_j\rangle&=&& \int_t^s \Delta b(r) \langle\nu,f_j'\rangle dr+\int_t^s b(r,\mu'_r,\alpha_r) \langle\nu-\nu',f_j'\rangle dr\\
     &&&+\frac{1}{2}\int_t^s \Delta \sigma^2(r) \langle\nu,f_j''\rangle dr+\frac{1}{2}\int_t^s \sigma^2(r,\mu'_r,\alpha_r) \langle\nu-\nu',f_j''\rangle dr\\
     &&&+ \int_t^s \Delta \lambda(r) \langle\nu,g_j\rangle dr+\int_t^s \lambda(r,\mu'_r,\alpha_r) \langle\nu-\nu',g_j\rangle dr,
\end{alignat*}
where $g_j=\sum_{i=1}^{\deg(f_j)}m_if_j^{(i)}$ with $m_i:=\frac{1}{i!}\int_{\R}y^i\gamma(dy)$.
Recall now that the collection of coefficients $\{c_j\}_{j\in\N}$ satisfies \eqref{eq: coeff}, so that,
\[c_j\le k_j:=\min\{c_{j_1},c_{j_2},c_{j_g}\},
\]
where $c_{j_1}$, $c_{j_2}$ and $c_{j_g}$ are the coefficients of $f'_j$, $f''_j$ and $g_j$ respectively.
We can therefore multiply by $k_j$ both sides of the above equality to get, using also Assumption \ref{ass:bounded} and \ref{ass:convergence},
\begin{alignat*}{2}
     c_j|\langle \nu_s-\nu'_s,f_j\rangle|&\le{}&&  k_j|\langle \nu_s-\nu'_s,f_j\rangle|\\
     &\le{}&&  \bar{C} \int_t^s \dist(\mu_r,\mu'_r) \big(c_{j_1}|\langle\nu,f_j'\rangle|+c_{j_2}|\langle\nu,f_j''\rangle| +c_{g_j}|\langle\nu,g_j\rangle|\big)dr\\
     &&&+\int_t^s c_{j_1}|\langle\nu-\nu',f_j'\rangle| + c_{j_2}|\langle\nu-\nu',f_j''\rangle| dr+\int_t^s c_{g_j}|\langle\nu-\nu',g_j\rangle| dr,
\end{alignat*}
for some constant $\bar{C}$ which depends only on the coefficients of \eqref{eq:SDE}.
By summing up over $j\in\N$ and recalling \eqref{eq:uniform_serie}, we obtain
\[
\dist(\nu_s,\nu'_s)\le 3\bar{C}\bigg(\int_t^s\dist(\mu_r,\mu'_r)+\int_t^s\dist(\nu_r,\nu'_r)\bigg).
\]
Using Gronwall's Lemma, we obtain
\[
\dsup{s}(\Phi(\bar{\mu}),\Phi(\bar{\mu}'))\le e^{3\bar{C}s}\int_t^s\dsup{r}(\bar{\mu},\bar{\mu}'),
\]
for any $t\le s\le T$.
Denoting now $C(s):=e^{3\bar{C}s}$ and $\Phi^k$ the composition of $k$ times the map $\Phi$, it can be verified, by induction,
\[
\dsup{T}(\Phi^k(\bar{\mu}),\Phi^k(\bar{\mu}'))\le \frac{C(T)^kT^k}{k!}\dsup{T}(\bar{\mu},\bar{\mu}').
\]
For $k$ large enough $\Phi^k$ is a contraction on $\cX$, which is a complete metric space 
in view of Lemma \ref{lem:complete}.
Thus, the map $\Phi$ admits a unique fixed point.
\end{proof}

\section{Semicontinuous envelopes}
\label{sec:envelope}

In this section, we show that
the semicontinuous envelopes
defined on $\invSetN$ converge to the envelopes
defined on $\invSet$.

\begin{lem}\label{lem:envelopes}
    Let $(E, \tau)$ be a topological space and $(E_N, \tau_N)_{N \in \N}$ 
    a sequence of topological spaces with $(E_N)_{N \in \N}$ increasing to $E$, i.e.,
    $\cup_{n\in\N} E_N = E$ and $E_N\subset E_{N+1}$ for any $N$. Let $\tau_N$ the subspace topology induced by $\tau$.
    Denote by $u^* : E \to \R\cup\{\infty\}$ the upper semicontinuous envelope 
    on $(E, \tau)$ and by $u_N^* : E_N \to \R\cup\{\infty\}$ the upper semicontinuous envelope on $(E_N, \tau_N)$.
    Then, $\lim_{N \to \infty} u_N^* = u^*$.
    Similarly, if $u^N_*$ is the lower semicontinuous envelope on $(E_N, \tau_N)$, then $\lim_{N \to \infty} u^N_* = u_*$
\end{lem}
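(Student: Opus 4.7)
My plan is to prove the upper-envelope statement $\lim_{N\to\infty} u^*_N(x) = u^*(x)$ pointwise on $E$; the lower-envelope claim then follows by applying the upper result to $-u$, using the identity $u_* = -(-u)^*$. Throughout I would work with sequential characterizations of semicontinuous envelopes, which are legitimate under the first-countability assumption present in the application to $\invSetCl$.

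First I would record two easy facts by comparing classes of sequences converging to a given point $x$. Since the subspace topologies are nested as $E_N \subset E_{N+1} \subset E$, every sequence in $E_N$ converging to $x$ is also a sequence in the larger spaces converging to $x$. Enlarging the class over which the limsup is taken can only increase the resulting sup, which yields both $u^*_N(x) \le u^*_{N+1}(x)$ for $x \in E_N$ and $u^*_N(x) \le u^*(x)$ everywhere on $E_N$. Hence the pointwise limit $\ell(x) := \lim_N u^*_N(x) = \sup_N u^*_N(x)$ exists and satisfies $\ell(x) \le u^*(x)$.

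For the reverse inequality, I would pick a sequence $y_k \to x$ in $E$ with $u(y_k) \to u^*(x)$, which exists by the definition of the sequential upper envelope and first-countability. Because $E = \cup_N E_N$ with the $E_N$ increasing, each $y_k$ belongs to some $E_{N_k}$. If the indices $N_k$ are bounded, say $N_k \le N_0$ for all $k$, then $\{y_k\} \subset E_{N_0}$ and so $u^*_{N_0}(x) \ge \limsup_k u(y_k) = u^*(x)$, giving $\ell(x) = u^*(x)$ directly.

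The main obstacle is the case when the $N_k$ grow unboundedly, in which case no single $E_N$ sees the full approaching sequence. I would handle it by a diagonal-approximation argument: for each fixed $N$ large enough that $x \in E_N$, I would replace $y_k$ by an approximant $\tilde y_k^N \in E_N$ close to $y_k$ in $\tau$ and with $u(\tilde y_k^N)$ close to $u(y_k)$; then letting $k\to\infty$ along a suitable diagonal subsequence would give $u^*_N(x) \ge u^*(x) - \varepsilon_N$ with $\varepsilon_N \to 0$, hence $\ell(x) \ge u^*(x)$. Constructing these approximants is the technical heart of the argument and would rely on the specific structure of the paper's $E_N = \invSetNCl{}$ (defined by an exponential-moment bound), for instance via a controlled truncation of measures that preserves the weak-$*$ limit while bringing the exponential moment below the $N$-threshold.
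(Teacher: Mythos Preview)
Your instinct that Case~2 (the indices $N_k$ unbounded) is the crux is exactly right --- so right, in fact, that the lemma is \emph{false} at the stated level of generality. Take $E=\{0\}\cup\{1/n:n\ge 1\}\subset\R$ with the induced topology, $E_N=\{0\}\cup\{1/n:1\le n\le N\}$, and set $u(0)=0$, $u(1/n)=1$. Each $E_N$ is finite, hence discrete in the subspace topology, so $u_N^*(0)=u(0)=0$ for every $N$; yet $u^*(0)=1$ since $1/n\to 0$ in $E$. Thus $\lim_N u_N^*(0)=0\neq u^*(0)$. The paper's own proof uses the neighborhood representation $u_N^*(\mu)=\inf_{W\in U(\mu)}\sup_{W\cap E_N}u$ and correctly establishes that for each fixed $W$ the inner supremum increases to $\sup_W u$; its final sentence then passes from this to $\lim_N u_N^*=u^*$, which is an unjustified interchange of $\lim_N$ with $\inf_W$ and is precisely what the example above breaks. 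So the obstacle you isolated is genuine and is present in the paper's argument as well, just in a different guise.

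Your proposed workaround --- replacing each $y_k$ by some $\tilde y_k^N\in E_N$ that is both $\tau$-close to $y_k$ and has $u(\tilde y_k^N)$ close to $u(y_k)$ --- asks for more than the hypotheses provide: a truncation that pushes a measure into $E_N$ may control the topology, but for an arbitrary $u$ it gives no control on $u$-values. Any rescue in the paper's setting will need extra information about the specific function at hand (e.g.\ the equality $V_N^*=V_*^N$ already obtained from comparison), not merely the structure of the exhausting sets.
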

\begin{proof}
    Consider the following representations of the semicontinuous envelopes.
    Let $U(\mu)$ be the collection of $\tau$-neighborhoods of $\mu$.
    Then, since $E_N$ is endowed with the subspace topology,
    for any $N \in \N$,
    \begin{alignat*}{3}
        u^*(\mu) &= \inf_{W \in U(\mu)} \sup_{W} u, && \quad \text{for }\mu \in E, \\
        u_N^*(\mu) &= \inf_{W \in U(\mu)} \sup_{W \cap E_N} u, && 
        \quad \text{for }\mu \in E_N.
    \end{alignat*}
    Clearly $u_N^* \le u_{N+1}^* \le u^*$.
	Suppose first $u^*(\mu)<\infty$.
	For $W \in U(\mu)$, choose a sequence $\mu_n$
    such that $\sup_W u \leq u(\mu_n) + 1/n$.
    Let $M : \N \to \N$ be a function such that $\mu_n \in E_{M(n)}$.
    Without loss of generality, we may choose $M$ to be strictly increasing.
    Thus,
    $$
    \sup_{n} \sup_{W \cap E_{M(n)}} u = \sup_W u.
    $$
    Since above holds for every $W \in U(\mu)$,
 $\lim_{N \to \infty} u_N^*(\mu) = u^*(\mu)$.
    If $u^*(\mu)=\infty$, we repeat the same argument 
    with a sequence $\mu_n$ such that $u(\mu_n)>n$.
\end{proof}

\bibliographystyle{abbrv}
\bibliography{mckean_vlasov}

\end{document}